\newtheorem{theorem}{Theorem}[section]
\newtheorem{lemma}[theorem]{Lemma}
\newtheorem{prop}[theorem]{Proposition}
\newtheorem{definition}[theorem]{Definition}
\newtheorem{remark}[theorem]{Remark}
\def\eps{\varepsilon}
\def\proofof#1{\begin{proof}[Proof (of~#1).]}
\def\XXint#1#2#3{{\setbox0=\hbox{$#1{#2#3}{\int}$} \vcenter{\vspace{-1pt}\hbox{$#2#3$}}\kern-.5\wd0}}
\def\Xint#1{\mathchoice {\XXint\displaystyle\textstyle{#1}}{\XXint\textstyle\scriptstyle{#1}}{\XXint\scriptstyle\scriptscriptstyle{#1}}{\XXint\scriptscriptstyle\scriptscriptstyle{#1}}\!\int}
\def\intmed{\hbox{\ }\Xint{\hbox{\vrule height -0pt width 10pt depth 1pt}}}
\numberwithin{equation}{section}
\def\R{\mathbb R}
\def\N{\mathbb N}
\def\de0#1{\rule[3pt]{#1}{0.4pt} \hspace{-0.1pt} \rule[3.05pt]{0.05pt}{0.4pt} \hspace{-0.1pt} \rule[3.1pt]{0.05pt}{0.4pt} \hspace{-0.1pt} \rule[3.15pt]{0.05pt}{0.4pt} \hspace{-0.1pt} \rule[3.2pt]{0.05pt}{0.4pt} \hspace{-0.1pt} \rule[3.25pt]{0.05pt}{0.4pt} \hspace{-0.1pt} \rule[3.3pt]{0.05pt}{0.4pt} \hspace{-0.1pt} \rule[3.35pt]{0.05pt}{0.4pt} \hspace{-0.1pt} \rule[3.4pt]{0.05pt}{0.4pt} \hspace{-0.1pt} \rule[3.45pt]{0.05pt}{0.4pt} \hspace{-0.1pt} \rule[3.5pt]{0.05pt}{0.4pt} \hspace{-0.1pt} \rule[3.55pt]{0.05pt}{0.4pt} \hspace{-0.1pt} \rule[3.6pt]{0.05pt}{0.4pt} \hspace{-0.1pt} \rule[3.65pt]{0.05pt}{0.4pt} \hspace{-0.1pt} \rule[3.7pt]{0.05pt}{0.4pt} \hspace{-0.1pt} \rule[3.75pt]{0.05pt}{0.4pt} \hspace{-0.1pt} \rule[3.8pt]{0.05pt}{0.4pt} \hspace{-0.1pt} \rule[3.85pt]{0.05pt}{0.4pt} \hspace{-0.1pt} \rule[3.9pt]{0.05pt}{0.4pt} \hspace{-0.1pt} \rule[3.95pt]{0.05pt}{0.4pt} \hspace{-0.1pt} \rule[4.0pt]{0.05pt}{0.4pt} \hspace{-0.1pt} \rule[4.05pt]{0.05pt}{0.4pt} \hspace{-0.1pt} \rule[4.1pt]{0.05pt}{0.4pt} \hspace{-0.1pt} \rule[4.15pt]{0.05pt}{0.4pt} \hspace{-0.1pt} \rule[4.2pt]{0.05pt}{0.4pt} \hspace{-0.1pt} \rule[4.25pt]{0.05pt}{0.4pt} \hspace{-0.1pt} \rule[4.3pt]{0.05pt}{0.4pt} \hspace{-0.1pt} \rule[4.35pt]{0.05pt}{0.4pt} \hspace{-0.1pt} \rule[4.4pt]{0.05pt}{0.4pt} \hspace{-0.1pt} \rule[4.45pt]{0.05pt}{0.4pt} \hspace{-0.1pt} \rule[4.5pt]{0.05pt}{0.4pt} \hspace{-0.1pt} \rule[4.55pt]{0.05pt}{0.4pt} \hspace{-0.1pt} \rule[4.6pt]{0.05pt}{0.4pt} \hspace{-0.1pt} \rule[4.65pt]{0.05pt}{0.4pt} \hspace{-0.1pt} \rule[4.7pt]{0.05pt}{0.4pt} \hspace{-0.1pt} \rule[4.75pt]{0.05pt}{0.4pt} \hspace{-0.1pt} \rule[4.8pt]{0.05pt}{0.4pt} \hspace{-0.1pt} \rule[4.85pt]{0.05pt}{0.4pt} \hspace{-0.1pt} \rule[4.9pt]{0.05pt}{0.4pt} \hspace{-0.1pt} \rule[4.95pt]{0.05pt}{0.4pt} \hspace{-0.1pt} \rule[5.0pt]{0.05pt}{0.4pt} \hspace{-0.1pt} \rule[5.05pt]{0.05pt}{0.4pt} \hspace{-0.1pt} \rule[5.1pt]{0.05pt}{0.4pt} \hspace{-0.1pt} \rule[5.15pt]{0.05pt}{0.4pt} \hspace{-0.1pt} \rule[5.2pt]{0.05pt}{0.4pt} \hspace{-0.1pt} \rule[5.25pt]{0.05pt}{0.4pt} \hspace{-0.1pt} \rule[5.3pt]{0.05pt}{0.4pt} \hspace{-0.1pt} \rule[5.35pt]{0.05pt}{0.4pt} \hspace{-0.1pt} \rule[5.4pt]{0.05pt}{0.4pt} \hspace{-0.1pt} \rule[5.45pt]{0.05pt}{0.4pt} \hspace{-0.1pt} \rule[5.5pt]{0.05pt}{0.4pt} \hspace{-0.1pt} \rule[5.55pt]{0.05pt}{0.4pt} \hspace{-0.1pt} \rule[5.6pt]{0.05pt}{0.4pt} \hspace{-0.1pt} \rule[5.65pt]{0.05pt}{0.4pt} \hspace{-0.1pt} \rule[5.7pt]{0.05pt}{0.4pt} \hspace{-0.1pt} \rule[5.75pt]{0.05pt}{0.4pt} \hspace{-0.1pt} \rule[5.8pt]{0.05pt}{0.4pt} \hspace{-0.1pt} \rule[5.85pt]{0.05pt}{0.4pt} \hspace{-0.1pt} \rule[5.9pt]{0.05pt}{0.4pt} \hspace{-0.1pt} \rule[5.95pt]{0.05pt}{0.4pt} \hspace{-0.1pt} \rule[6.0pt]{0.05pt}{0.4pt}}	
\title[Particle approximation for nonlocal transport equations]{Deterministic particle approximation for nonlocal transport equations with nonlinear mobility}
\author[M.~Di Francesco]{Marco Di Francesco}
\address{Marco Di Francesco - Dipartimento di Ingegneria e Scienze dell’Informazione e Matematica, Universit\`{a} degli Studi dell'Aquila. Via Vetoio 1, 67100, Coppito (L’Aquila), Italy}
\email{marco.difrancesco@univaq.it}
\author[S.~Fagioli]{Simone Fagioli}
\address{Simone Fagioli - Dipartimento di Ingegneria e Scienze dell’Informazione e Matematica, Universit\`{a} degli Studi dell'Aquila. Via Vetoio 1, 67100, Coppito (L’Aquila), Italy}
\email{simone.fagioli@univaq.it}
\author[E.~Radici]{Emanuela Radici}
\address{Emanuela Radici - Dipartimento di Ingegneria e Scienze dell’Informazione e Matematica, Universit\`{a} degli Studi dell'Aquila. Via Vetoio 1, 67100, Coppito (L’Aquila), Italy}
\email{emanuela.radici@univaq.it}
\begin{document}
\begin{abstract}
We construct a deterministic, Lagrangian many-particle approximation to a class of nonlocal transport PDEs with nonlinear mobility arising in many contexts in biology and social sciences. The approximating particle system is a nonlocal version of the follow-the-leader scheme. We rigorously prove that a suitable discrete piece-wise density reconstructed from the particle scheme converges strongly in $L^1_{loc}$ towards the unique entropy solution to the target PDE as the number of particles tends to infinity. The proof is based on uniform BV estimates on the approximating sequence and on the verification of an approximated version of the entropy condition for large number of particles. As part of the proof, we also prove uniqueness of entropy solutions. We also provide a specific example of non-uniqueness of weak solutions and discuss about the interplay of the entropy condition with the steady states. Finally, we produce numerical simulations supporting the need of a concept of entropy solution in order to get a well-posed semigroup in the continuum limit, and showing the behaviour of solutions for large times.
\end{abstract}

\maketitle

\section{Introduction}

A wide range of phenomena in biology and social sciences can be described by the combination of classical (local) - linear or nonlinear - \emph{diffusion} with some \emph{nonlocal transport} effects. Examples can be found in bacterial chemotaxis \cite{keller_segel,painter_hillen}, animal swarming phenomena \cite{okubo,capasso}, pedestrian movements in a dense crowd \cite{hughes}, and more in general in socio-economical sciences \cite{sznajd,aletti}. In a fairly general setting, a set of $N$ individuals $x_1,\ldots,x_N$ located in a sub-region of the Euclidean space $\R^d$ are subject to a drift which is affected by the status of each other individual. In most of the above-mentioned applications, such a ``biased drift'' can be expressed through a set of first order ordinary differential equations
\begin{equation}\label{eq:intro_discrete}
\dot{x}_i(t)=v[(x_1(t),\ldots,x_N(t)],\qquad i=1,\ldots,N,
\end{equation}
in which the velocity law $v$ is known. Having in mind a particle system obeying the laws of classical mechanics or electromagnetism, the set of equations \eqref{eq:intro_discrete} is quite unconventional due to the absence of inertia. On the other hand, this choice is very common in the modelling of socio-biological systems, mainly due to the following three reasons. 
\begin{itemize}
\item Inertial effects are negligible in many socio-biological aggregation phenomena. Even in cases in which the system is appropriate for a fluid-dynamical description, a `thinking fluid' model, with a velocity field already adjusted to equilibrium conditions, is often preferable compared to a second order approach. The typical examples are in traffic flow and pedestrian flow modelling. Moreover, it is well known in the context of cells aggregation modelling that the time of response to the chemoattractant signal is, most of the times, negligible. Finally, inertia is almost irrelevant in many contexts of socio-economical sciences, such as opinion formation dynamics.
\item First-order modelling turns out to simulate real patterns in concrete relevant situations arising in traffic flow, pedestrian motion, and cell-aggregation, and such an achievement is satisfactory in many situations, in applied fields often lacking a unified rigorous modelling approach.
\item In several practical problem such as the behaviour of a crowd in a panic situation, the model can be seen as the outcome of an optimization process performed externally, in which the "best strategy" needed to solve the problem under study (reaching the exit in the shortest possible time, in the crowd example) is transmitted to the individuals in real time (e.g. a set of ``dynamic" evacuation signals in a smart building).
\end{itemize}

Further to the `discrete' approach \eqref{eq:intro_discrete}, these models are often posed in terms of a ``continuum" PDE approach via a continuity equation
\begin{equation}\label{eq:intro_continuum}
\partial_t \rho + \mathrm{div}(\rho v[\rho]) = 0,
\end{equation}
in which $\rho(\cdot,t)$ is a time dependent probability measure expressing the distribution of individuals on a given region at a given time, and in which the continuum velocity map $v=v[\rho]$ is detected as a reasonable ``cross-grained" version of its discrete counterpart in \eqref{eq:intro_discrete}. The modelling of biological movements and socio-economical dynamics are often simulated at the continuum level as the PDE approach is more easy-to-handle in order to analyse the qualitative behaviour of the whole system, in the form e.g. of the emergence of a specific pattern, or the occurrence of concentration phenomena, or the formation of shock waves or travelling waves. In this regard, the descriptive power of the qualitative properties of the solutions in the continuum setting is an argument in favour of the PDE approach \eqref{eq:intro_continuum}. On the other hand, the intrinsic discrete nature of the applied target situations under study would rather suggest an `individual based' description as the most natural one. For this reason, the justification of continuum models \eqref{eq:intro_continuum} as a \emph{many particle limits} of \eqref{eq:intro_discrete} in this context is an essential requirement to validate the use of PDE models.

As briefly mentioned above, the velocity law $v=v[\rho]$ in the PDE approach \eqref{eq:intro_continuum} may include several effects ranging from diffusion effect to external force fields, from nonlinear convection effects to nonlocal interaction terms. We produce here a non-exhaustive list of results available in the literature in which the continuum PDE \eqref{eq:intro_continuum} is obtained as a limit of a system of interacting particles, with a special focus on \emph{deterministic} particle limits, i.e. in which particles move according to a system of ordinary differential equations (i.e. without any stochastic term). The presence of a diffusion operator has several possible counterparts at the discrete level. The literature on this subject involving probabilistic methods is extremely rich and, by now, well established, see e.g. \cite{varadhan1,varadhan2,presutti} only to mention a few. A first attempt (mainly numerical) to a fully deterministic approach to diffusion equations is due to \cite{russo}, see \cite{gosse} for the case of nonlinear diffusion. 

Without diffusion and with only a local dependency $v=v(\rho)$, an extensive literature has been produced based on probabilistic methods (exclusion processes), see e.g. \cite{Ferrari,Ferrari-Nejjar}. A first rigorous result based on fully deterministic ODEs at the microscopic level for a nonlinear conservation law was recently obtained in \cite{DiFra-Rosini}. Nonlocal velocities $v=W\ast \rho$ have been considered as a special case of the theory developed in \cite{carrillo_choi_hauray}, with $W$ a given kernel (possibly singular) using techniques coming from kinetic equations, see \cite{hauray_jabin}. In all the above mentioned results, the particle system is obtained as a discretised version of the Lagrangian formulation of the system.

A slightly more difficult class of problems is the one in which the velocity $v=v[\rho]$ depends \emph{both locally and non-locally} from $\rho$.
Several results about the mathematical well-posedness of such models are available in the literature, which use either classical nonlinear analysis techniques or numerical schemes. In the paper \cite{colombo} a similar model is studied in the context of pedestrian movements, and the existence and uniqueness of entropy solutions is proven. We also mention \cite{defilippis_goatin}, which covers a more general class of problems, and \cite{amadori_shen} covering a similar model in the context of granular media. A quite general result was obtained in \cite{piccoli_rossi} in which the velocity map $\rho\mapsto v[\rho]$ is required to be Lipschitz continuous as a map from the space of probability measures (equipped with some $p$-Wasserstein distance) with values in $C(\R^d)$, and the authors prove convergence of a time-discretised Lagrangian scheme. We also mention \cite{betancourt}, in which a special class of local-nonlocal dependencies has been considered, however in a different numerical framework. We also recall at this stage the related results in \cite{Bu_DiF_Dol,Bu_Dol_Sch} on the overcrowding preventing version of the Keller-Segel system for chemotaxis, in which the existence and uniqueness of entropy solutions is proven. To our knowledge, no papers in the literature provide (so far) a rigorous result of convergence of a deterministic particle system of the form \eqref{eq:intro_discrete} towards a PDE of the form \eqref{eq:intro_continuum} in the case of local-nonlocal dependence $v=v[\rho]$. Indeed, the result in \cite{piccoli_rossi} does not apply to this case in view of the Lipschitz continuity assumption on the velocity field, see also a similar result in \cite{goatin_rossi}.

In this paper we aim at providing, for the first time, a rigorous deterministic many-particle limit for the one-dimensional \emph{nonlocal interaction equation} with \emph{nonlinear mobility}
\begin{equation}\label{eq:intro_PDE}
  \partial_t \rho -\partial_x (\rho v(\rho) K\ast \rho) = 0,
\end{equation}
in which $v$ and $K$ satisfy the following set of assumptions:
\begin{itemize}
\item[(Av)] $v \in C^1([0,+\infty))$ is a decreasing function such that $v(0)=v_{max}>0$, $v(M)=0$ for some $M>0$, $v'<0$ on interval $(0,M]$, $v\equiv 0$ on $[M,+\infty)$.
\item[(AK)] $K \in C^2(\R)$, $K(0)=0$ (without restriction), $K(x)=K(-x)$ for all $x\in \R$, $K'(x)>0$ for $x>0$, $K'' \in \mathrm{Lip}_{loc}(\R)$.
\end{itemize}
Also in view of the applications in mind, the unknown $\rho=\rho(x,t)$ in \eqref{eq:intro_PDE} will be assumed to be non-negative throughout the whole paper. The PDE \eqref{eq:intro_PDE} is coupled with an initial condition
\begin{equation}\label{eq:intro_initial}
  \rho(x,0)=\bar{\rho}(x),\qquad \bar{\rho}\in L^\infty(\R)\cap BV(\R),\,\,\,0\leq \bar{\rho}(x)\leq M,\,\,\,\hbox{$\mathrm{supp}(\bar{\rho})$ compact}.
\end{equation}
The constant $M$ here plays the role of a \emph{maximal density}, which is supposed not to be exceeded by the density for all times. Clearly, the property $\rho\in [0,M]$ has to be proven to be invariant with respect to time. We notice that the total mass of $\rho$ in \eqref{eq:intro_PDE} is formally conserved. For simplicity, throughout the paper we shall set
 We set
\begin{equation*}
 \| \bar{\rho} \|_{L^1(\R)}=1\,.
\end{equation*}
We set $[\bar{x}_{min}, \bar{x}_{max}]$ as the closed convex hull of $\mathrm{supp}\bar{\rho}$.

Our goal is to approximate rigorously the solution $\rho$ to \eqref{eq:intro_PDE} with initial datum $\bar{\rho}$ via a set of moving particles. More precisely, we aim to proving that the \emph{entropy solution} of the Cauchy problem for \eqref{eq:intro_PDE} can be obtained as the large particle limit of a discrete Lagrangian approximation of the form \eqref{eq:intro_discrete}. Such a Lagrangian approximation can be introduced as follows as a reasonable generalization of particle approximations considered previously in the literature in \cite{DiFra-Rosini,DiFra-Fagioli-Rosini,DiFra-Fagioli-Rosini-Russo1,DiFra-Fagioli-Rosini-Russo2}. For a fixed integer $N$ sufficiently large, we split $[\bar{x}_{min}, \bar{x}_{max}]$ into $N$ intervals such that
the integral of the restriction of $\bar{\rho}$ over each interval equals $1/N$. More precisely, we let $\bar{x}_0= \bar{x}_{min}$ and $\bar{x}_N = \bar{x}_{max}$, and define recursively the points $\bar{x}_i$ for $i \in \{ 1,\, \ldots,\, N-1\}$ as
\begin{equation}\label{eq:dscr_IC}
\bar{x}_i = \sup \left\lbrace x \in \R : \int_{\bar{x}_{i-1}}^x \bar{\rho}(x) dx < \frac{1}{N}  \right\rbrace\,.
\end{equation}
It is clear from the construction that $\int_{\bar{x}_{N-1}}^{\bar{x}_N} \bar{\rho}(x) dx = 1/N$ and $\bar{x}_0 < \bar{x}_1 < \ldots\, < \bar{x}_{N-1} < \bar{x}_N$.
Consider then $N+1$ particles located at initial time at the positions $\bar{x}_i$ and let them evolve accordingly to the following system ODEs
\begin{equation}\label{Odes}
\dot{x}_i(t) =   - \frac{v(R_i(t))}{N} \sum_{j > i} K'(x_i(t) - x_j(t)) - \frac{v(R_{i-1}(t))}{N} \sum_{j < i} K'(x_i(t) - x_j(t))\,,
\end{equation}
with $i\in\{0,\ldots,N\}$, where the discrete density $R_i(t)$ is defined as follows
\[ R_i (t):= \frac{1}{N(x_{i+1}(t) - x_i(t))},\qquad i=0,\ldots, N-1. \]
In \eqref{Odes}, each particle $x_i$ has mass $1/N$. We are then in position to define the $N$-discrete density
\begin{equation}\label{eq:discrete_density}
\rho^N(t,\,x):= \sum_{i=0}^{N-1}  R^N_i (t) \chi_{[x_i(t),\,x_{i+1}(t))}(x).
\end{equation}
We observe that $\rho^N(t,\cdot)$ has total mass equal to $1$ for all times. We refer to system \eqref{Odes} as \emph{non-local Follow-the-leader} scheme, as in fact this system is a non-local extension of the classical Follow-the-leader scheme previously considered in the literature. More in detail, system \eqref{Odes} is motivated as follows. The right-hand side of \eqref{Odes} represents the velocity of each particle. Therefore, it has to be reminiscent of a discrete Lagrangian formulation of the Eulerian velocity $-v(\rho)K'\ast \rho$ in the continuity equation \eqref{eq:intro_PDE}. Now, since we are in one-space dimension, the discrete density $R_i$ is a totally reasonable replacement for the continuum density $\rho$, except that one has to decide whether the discrete density should be constructed in a forward, backward, or centred fashion. Our choice of splitting the velocity $\dot{x}_i$ into a backward and forward term is motivated by the sign of the nonlocal interaction $K'(x_i-x_j)$, which is concordant with the sign of $x_i - x_j$. Hence, since $K'(x)$ is negative on $x<0$, particles labelled by $x_j$ with $x_j>x_i$ yield a drift on $x_i$ oriented towards the \emph{positive} direction. Since the role of the nonlinear mobility term $\rho v(\rho)$ is that of preventing overcrowding at high densities (consistently with the assumption of $v$ being monotone decreasing), such a drift term should be ``tempered" by the position of the $(i+1)$-th particle. This motivates the use or $v(R_i)$ in the sum with $x_j>x_i$. A symmetric argument justifies the use of $v(R_{i-1})$ in the remaining part of the sum with $x_j<x_i$.

Our main results concerns with the study of the many particle limit as $N \rightarrow \infty$ for the discrete density $\rho^N$ defined above. Apart from the above mentioned assumptions on $v$ and $K$ and $\bar\rho$, we shall also assume that $\bar\rho \in BV(\R)$. Such a condition is crucial in order to prove the needed estimate which guarantee that $\rho^N$ converge (up to a subsequence) to some limiting density $\rho$ in a strong enough topology. As a minimal requirement, the limit $\rho$ should satisfy \eqref{eq:intro_PDE} in a distributional sense. On the other hand, the presence of a nonlinear convection in \eqref{eq:intro_PDE} suggests the possibility of multiple weak solutions for fixed initial data. A notion of \emph{entropy solution} in the sense of Kruzkov \cite{kruzkov} is therefore needed to secure uniqueness. Motivated by this remark, we shall actually prove that the limit density $\rho$ of the above particle scheme is an entropy solution to \eqref{eq:intro_PDE} with initial condition $\bar\rho$, in the sense of the following definition.

\begin{definition}[Entropy solution]\label{solentropicadef}
Let $\bar\rho\in L^\infty(\R)\cap L^1_+(\R)$. Denoting $f(z):= zv(z)$, we say that $\rho:[0,+\infty)\times \R\rightarrow [0,+\infty)$ is an \emph{entropy solution} of~\eqref{eq:intro_PDE} with initial condition $\bar\rho$ if $\rho \in L^{\infty}([0,\infty), L^1(\R,[0,1]))$ and, for all constants $c\geq 0$ and for all $\varphi \in \mathcal{C}^{\infty}_c ([0,+\infty)\times \R)$ with $\varphi\geq 0$ one has
\[ 0 \leq \int_{\R} |\bar\rho(x) -c|\varphi(0,x)\,dx+\int_0^{+\infty} \int_{\R} |\rho -c|\varphi_t - \mathrm{sign}(\rho - c)[(f(\rho) - f(c))K' \ast \rho\varphi_x - f(c)K'' \ast \rho \varphi]\,dxdt. \]
\end{definition}

We are now ready to state the main result of our paper.

\begin{theorem}\label{main}
Assume $v$ and $K$ satisfy (Av) and (AK) respectively. Let $\bar{\rho} \in BV(\R)\cap L^1_+(\R)$ be a compactly supported function with total unit mass and such that $\bar\rho\leq M$.
Then, for all $T>0$, the discrete density $\rho^N$ constructed in \eqref{eq:discrete_density} converges almost everywhere and in $L^1([0,\,T] \times \R)$ to the unique entropy solution $\rho$ of the Cauchy problem
\begin{equation}\label{CauchyProblem}
\left\lbrace \begin{array}{ll}
\partial_t \rho = \partial_x(\rho v(\rho) K' \ast \rho) &(t,\,x) \in (0,\,T] \times \R\,,\\
\rho(0,\,x) = \bar{\rho}(x) &x \in \R\,.
\end{array}\right.
\end{equation}
\end{theorem}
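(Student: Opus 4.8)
The plan is to prove convergence of $\rho^N$ to the entropy solution in several stages, following the standard road-map for conservation laws adapted to the nonlocal setting. First I would establish \emph{a priori} estimates on the particle system \eqref{Odes} that are uniform in $N$. The key structural observation is that the discrete density $R_i(t)=1/(N(x_{i+1}-x_i))$ evolves according to a discrete continuity equation obtained by differentiating $x_{i+1}-x_i$ in time and using \eqref{Odes}. From this one should first prove the \emph{maximal density principle}: if $R_i(0)\le M$ for all $i$, then $R_i(t)\le M$ for all $t$. The mechanism is that the nonlinear mobility $v(R_i)$ vanishes exactly at $R_i=M$, so that the velocity law degenerates and prevents particles from crossing or compressing beyond the threshold; one shows that the set $\{R_i\le M\}$ is invariant by examining the sign of $\dot R_i$ when $R_i$ approaches $M$. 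Simultaneously this guarantees particles do not collide, so \eqref{Odes} has a global-in-time solution and the $L^\infty$ bound $0\le\rho^N\le M$ holds for all times, as required by the invariance claim in the introduction. I would also record the trivial conservation of total mass and a bound on the support of $\rho^N(t,\cdot)$ coming from the uniform bound on the velocities (finite because $K'$ is bounded on the relevant compact sets and $v$ is bounded).

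Second, I would prove a uniform \emph{BV estimate}: a control of $\mathrm{TV}(\rho^N(t,\cdot))$ in terms of $\mathrm{TV}(\bar\rho)$, uniform in $t\in[0,T]$ and in $N$. This is the technical heart of the compactness argument and is the step I expect to be the \textbf{main obstacle}. The natural quantity to estimate is $\sum_i |R_i-R_{i-1}|$, for which one derives a differential inequality by differentiating in time and inserting the discrete evolution. The difficulty is that, unlike the purely local follow-the-leader scheme, the velocities here carry the nonlocal factors $K'\ast\rho^N$ and $K''\ast\rho^N$, so the discrete flux differences do not telescope cleanly; one must exploit assumption (AK) ($K\in C^2$, $K''$ locally Lipschitz, the fixed compact support from the previous step) to bound the nonlocal contributions by $C\,\mathrm{TV}(\rho^N)$ plus lower-order terms, and then close the estimate via a Gronwall argument. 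A companion estimate controlling the modulus of continuity in time (an $L^1$-in-space, Lipschitz- or at least uniformly continuous-in-time bound) is obtained from the weak formulation together with the velocity bounds.

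Third, with the uniform $L^\infty$, BV-in-space and continuity-in-time bounds in hand, a generalized Aubin--Lions / Helly-type compactness argument yields a subsequence $\rho^{N_k}\to\rho$ strongly in $L^1_{loc}$ and a.e., with $\rho\in L^\infty([0,T];L^1\cap BV)$ and $0\le\rho\le M$. The convolution terms $K'\ast\rho^{N_k}$ and $K''\ast\rho^{N_k}$ converge locally uniformly by the support bound and the regularity of $K$. Fourth, I would verify that the limit $\rho$ satisfies the Kruzkov entropy inequality of Definition 1.1. The strategy is to write a \emph{discrete entropy inequality} directly at the particle level: for each constant $c\ge0$ one tests the discrete dynamics against $|R_i-c|$-type quantities and shows that the resulting discrete analogue of the entropy production has the right sign up to an error that vanishes as $N\to\infty$. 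Passing to the limit using the strong $L^1$ convergence and the convergence of the nonlocal kernels then produces the entropy inequality for $\rho$. Finally, since the earlier part of the paper establishes uniqueness of entropy solutions, the whole sequence $\rho^N$ (not merely a subsequence) converges to this unique limit, completing the proof.
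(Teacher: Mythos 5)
Your road map for the convergence part is essentially the paper's: discrete maximum principle giving non-collision, global existence, $0\le\rho^N\le M$ and a uniform support bound; a spatial BV estimate closed by Gronwall; compactness via a generalized Aubin--Lions argument; a discrete Kruzkov inequality at the particle level whose error terms vanish as $N\to\infty$. However, there is one genuine gap and one step that would fail as stated. The gap is your final step: you write that ``the earlier part of the paper establishes uniqueness of entropy solutions,'' but nothing before Theorem~\ref{main} does so --- uniqueness is part of what this theorem asserts (``the unique entropy solution''), and the paper proves it \emph{inside} this proof. It is also not a routine citation of Kruzkov's theory, because the flux is nonlocal: the paper's argument freezes the convolutions, regards two entropy solutions $\rho$ and $\varrho$ as entropy solutions of \emph{local} conservation laws with coefficients $P=K'\ast\rho$ and $Q=K'\ast\varrho$, applies the Karlsen--Risebro $L^1$-stability theorem for conservation laws with rough coefficients (Theorem~\ref{KarlsenRiebro}) to get $\|\rho(t)-\varrho(t)\|_{L^1}\le C(K,\bar\rho)\,t\,\|\rho-\varrho\|_{L^\infty([0,T];L^1)}$, and then closes by a contradiction/Gronwall argument exploiting time-translation invariance. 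Without this (or an equivalent doubling-of-variables argument adapted to the nonlocal term), your conclusion that the \emph{whole} sequence converges has no foundation.

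The second issue concerns time compactness. You propose to obtain an ``$L^1$-in-space, Lipschitz-in-time'' bound from the weak formulation and the velocity bounds. The paper states explicitly that a uniform $L^1$ control of the time oscillations is \emph{not} available for this scheme; what is actually proven (Proposition~\ref{continuitytime}) is Lipschitz continuity in time with respect to the $1$-Wasserstein distance, obtained through the pseudo-inverse (cumulative distribution) representation of $\rho^N$, and the generalized Aubin--Lions lemma (Theorem~\ref{thm:aubin}) is invoked precisely because it accepts this weaker, metric-type time regularity in combination with the spatial BV bound. Your hedged alternative (``at least uniformly continuous'') could in principle be rescued by interpolating the $L^1$ distance against the BV bound and the $W_1$ modulus, but the Lipschitz-in-$L^1$ claim as written is the step of your plan that would not go through, since the jump values $R_i$ evolve with speed of order $N$.
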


As a by-product, the above result also imply existence of entropy solutions for \eqref{CauchyProblem}, a task which has been touched in other papers previously \cite{colombo,defilippis_goatin,Bu_DiF_Dol,betancourt}.
Implicitly, our results also asserts the uniqueness of entropy solutions for \eqref{eq:intro_PDE}, a side result that we shall prove as well in the paper, similarly to what done in \cite{Bu_DiF_Dol,Bu_Dol_Sch}.

The need of the entropy condition to define a suitable notion of solution semigroup for \eqref{eq:intro_PDE} is not only motivated by the possibility of proving its uniqueness. We actually prove in the paper that a mere notion of weak solution does not infer the well-posedness of the semigroup as multiple weak solution can be produced with the same initial condition. 

Our paper is structured as follows. In Section \ref{sec:2} we introduce the nonlocal follow-the-leader particle scheme and prove that it satisfies a discrete maximum principle, a crucial ingredient in order to deal with the particle approximation in the sequel of the paper. In Section \ref{sec:convergence} we prove all the estimates needed in order to detect strong $L^1$ compactness for the approximating sequence $\rho^N$. The main ingredient of this section is the $BV$ estimate proven in Proposition \ref{totalvariation}. We emphasize that the presence of an \emph{attractive} interaction potential in the particle system implies most likely a \emph{growth} w.r.t. time of the total variation. Therefore, one has to check that the blow-up in finite time of the total variation is avoided. In Section \ref{sec:consi}, we prove that the limit of the approximating sequence is an entropy solution in the sense of Definition \ref{solentropicadef}. This task is quite technical as it requires checking a discrete version of Kruzkov's entropy condition. In Section \ref{sec:discussion} we provide an explicit example of non uniqueness of weak solutions, which has links with the admissibility of steady states. Finally, in Section \ref{sec:numerics} we complement our results with numerical simulations.

\section{The non-local Follow-the-leader scheme}\label{sec:2}

In this section we introduce and analyse in detail our approximating particle scheme \eqref{Odes}. Here the macroscopic variable $\rho$ does not need to be labelled by $N$, as $N$ is supposed to be fixed throughout the whole section. The regularity assumptions on $v$ and $K$ in (Av) and (AK) imply that the right-hand side of \eqref{Odes} is locally Lipschitz with respect to the $N+1$-tuple $(x_0,x_1,\ldots,x_N)$ as long as we can guarantee that the denominator in $R_i$ does not vanish. Such a property is a consequence of the following  \emph{Discrete Maximum Principle}, ensuring that the particles cannot touch each other at any time. This implies both the (global-in-time) existence of solutions of the system~\eqref{Odes} for all times $t>0$, and the conservation of the initial particle ordering during the evolution.

\begin{lemma}[Discrete Maximum Principle]\label{lem:maximum}
Let $N\in\N$ be fixed and assume that (Av) and (AK) hold. In particular, let $M>0$ be as in assumption (Av). Let $\bar{x}_0<\bar{x}_1<\ldots<\bar{x}_N$ be the initial positions for \eqref{Odes}, and assume that
\begin{equation}\label{eq:MPcondition}
\bar{x}_{i+1}-\bar{x}_i \geq \frac{1}{MN}
\end{equation}
Then every solution $x_i(t)$ to the system~\eqref{Odes} satisfies
\begin{equation}\label{MaxPrinc}
\frac{1}{MN} \leq  x_{i+1}(t) - x_i(t) \qquad \hbox{for all $i \in \{0,\,\ldots,\,N-1\}$ and for all $t \in [0,\,+\infty)$}.
\end{equation}
Consequently, the unique solution $(x_0(t),\ldots,x_N(t))$ to \eqref{Odes} with initial condition $(\bar{x}_0,\ldots,\bar{x}_N)$ exists globally in time.
\end{lemma}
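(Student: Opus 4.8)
**The plan is to show that the minimal inter-particle distance stays above the threshold $\frac{1}{MN}$ by a continuity/barrier argument.** Because the right-hand side of \eqref{Odes} is locally Lipschitz wherever $R_i$ is well-defined (i.e.\ wherever no two consecutive particles coincide), a local solution exists on some maximal interval $[0,T^*)$ by Picard--Lindel\"of. My goal is to prove that the inequality \eqref{MaxPrinc} propagates from $t=0$ for as long as the solution exists; once this gap estimate is secured, the denominators in $R_i$ stay bounded away from zero, so the velocity field remains Lipschitz and the solution cannot blow up, forcing $T^*=+\infty$.

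The core computation is to estimate the time derivative of the gap $d_i(t):=x_{i+1}(t)-x_i(t)$ at a moment when $R_i$ attains its critical value, i.e.\ when $d_i=\frac{1}{MN}$ and hence $R_i=M$. Differentiating,
\[
\dot d_i(t)=\dot x_{i+1}(t)-\dot x_i(t),
\]
and I would plug in the two right-hand sides from \eqref{Odes}. The crucial observation is that the mobility factor $v(R_i)$ multiplies the ``forward'' interaction sum for particle $x_i$ and, by the structure of the scheme, the ``backward'' sum for particle $x_{i+1}$. When $R_i=M$ we have $v(R_i)=v(M)=0$ by assumption (Av), which switches off precisely those terms in $\dot x_{i+1}-\dot x_i$ that could push the two particles together. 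After collecting terms, I expect the surviving contributions to combine into an expression whose sign forces $\dot d_i\geq 0$ at the critical threshold. This is the degeneracy of the mobility at the maximal density doing exactly the job it was designed for at the discrete level.

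To turn this pointwise sign computation into a rigorous bound I would argue by contradiction (or equivalently via a standard ODE comparison / first-touching-time argument). Suppose \eqref{MaxPrinc} fails and let $t_0\in(0,T^*)$ be the first time at which some gap reaches the threshold, $d_{i_0}(t_0)=\frac{1}{MN}$, with all gaps $\geq\frac{1}{MN}$ on $[0,t_0]$. At $t_0$ the index $i_0$ realizing the minimal gap has $R_{i_0}=M$, so the computation above gives $\dot d_{i_0}(t_0)\geq 0$, contradicting the assumption that $d_{i_0}$ was decreasing through the threshold. Care must be taken because several gaps could reach the minimum simultaneously and the minimum of finitely many $C^1$ functions is only Lipschitz; I would handle this by working with the quantity $m(t):=\min_i d_i(t)$ and using the standard fact that its lower Dini derivative is controlled by $\dot d_{i}$ for any index $i$ attaining the minimum, which suffices for the comparison.

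The main obstacle I anticipate is the sign bookkeeping in Step~2: one must verify that, after cancellation of the $v(R_{i_0})$ terms, \emph{all} remaining terms in $\dot x_{i_0+1}-\dot x_{i_0}$ have the favourable (non-negative) sign, using the monotonicity $K'(x)>0$ for $x>0$ from (AK), the preserved ordering $x_0<\cdots<x_N$ on $[0,t_0]$, and the non-negativity of the mobilities $v(R_{j})\geq 0$ (which itself relies on $R_j\in[0,M]$, so the maximum-principle lower bound is being used self-consistently). This requires splitting the interaction sums according to whether the interacting particle lies to the left or right of the pair $\{x_{i_0},x_{i_0+1}\}$ and checking each group separately; the interaction of $x_{i_0}$ with $x_{i_0+1}$ itself must also be tracked, as it appears in both velocities with opposite sign and contributes a definite-sign repulsive term $+\tfrac{v(R_{i_0})}{N}K'(x_{i_0}-x_{i_0+1})$-type expression that I expect to reinforce rather than oppose the opening of the gap.
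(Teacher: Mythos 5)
Your core mechanism is the same as the paper's and is correct: at a touching time the critical gap gives $v(R_{i_0})=0$, which kills exactly the terms in \eqref{Odes} carrying that factor, and the remaining sums have definite signs by (AK) and the preserved ordering, so $\dot x_{i_0}(t_0)\le 0\le \dot x_{i_0+1}(t_0)$, i.e.\ $\dot d_{i_0}(t_0)\ge 0$. (Minor point: the mutual term between $x_{i_0}$ and $x_{i_0+1}$ also carries the factor $v(R_{i_0})$, so it vanishes at the critical gap rather than acting as a repulsive reinforcement.) The genuine gap is your contradiction step. From $\dot d_{i_0}(t_0)\ge 0$ at the single time $t_0$ you cannot conclude that $d_{i_0}$ stays above $\frac{1}{MN}$ afterwards: if $\dot d_{i_0}(t_0)=0$, the gap can still dip below the threshold immediately after (think of $\frac{1}{MN}-(t-t_0)^3$), so there is no ``decreasing through the threshold'' to contradict. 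And the equality case is not exotic here: $\dot x_{i_0+1}(t_0)=0$ occurs exactly when $x_{i_0+2}-x_{i_0+1}=\frac{1}{MN}$ as well (or $i_0+1=N$), and the equally spaced configuration with all gaps equal to $\frac{1}{MN}$ is a genuine steady state of \eqref{Odes}, so no strict sign is available in general. Handling this degeneracy is the bulk of the paper's proof: there, vanishing velocity at one end of the touching pair is shown to force the adjacent gap to be critical, this propagates outward particle by particle, and one concludes either by finding a strictly moving particle at the end of the critical chain (a first-order Taylor expansion then opens that gap) or by reaching the fully static equally spaced configuration, which persists in time and contradicts the assumed violation. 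Your Dini-derivative remark does not repair this, because a one-point inequality $D^+m(t_0)\ge 0$ is not a differential inequality that can be integrated; comparison arguments need the inequality on an interval or on a sublevel set.

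There is a way to rescue your barrier strategy, but it requires an observation you did not make: (Av) gives $v\equiv 0$ on all of $[M,+\infty)$, not merely $v(M)=0$. Hence your sign computation is valid on the whole region $\{0<d_i\le \frac{1}{MN}\}$ (as long as all particles remain ordered), since there $R_i\ge M$ and still $v(R_i)=0$, giving $\dot d_i\ge 0$ below the threshold as well. With the differential inequality on the sublevel set, the standard argument closes: working before the first crossing time (so ordering is guaranteed), if $d_i(s)<\frac{1}{MN}$ for some $s$, set $\tau=\sup\{t\le s:\ d_i(t)\ge \frac{1}{MN}\}$; then $d_i(\tau)=\frac{1}{MN}$, $d_i<\frac{1}{MN}$ on $(\tau,s]$, and $\dot d_i\ge 0$ there forces $d_i(s)\ge\frac{1}{MN}$, a contradiction. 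This yields \eqref{MaxPrinc} up to the maximal time, keeps the denominators in $R_i$ away from zero, and gives global existence as you indicated. As written, however, your proof rests on the invalid pointwise contradiction and is therefore incomplete.
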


\begin{proof}
Let $T_{max}>0$ be the maximal existence time for \eqref{Odes}. Due to the assumptions (Av) and (AK), the local-in-time solution $(x_0(t),\ldots,x_N(t))$ is $C^1$ on $[0,T_{max})$. If we prove that \eqref{MaxPrinc} holds on $[0,T_{max})$, this will automatically prove global existence by a simple continuation principle. Arguing by contradiction, assume that $t_1< T_{max}$ is the first instant where two consecutive particles are the distance $1/MN$ and get closer afterwards, i.e.
\[t_1=\inf\{ t \in [0,\,T] : \,\,\hbox{there exists}\, i\, : x_{i+1}(t) - x_i(t) = 1/MN \},\]
and there exists $t_2 \in (t_1,\,T]$ such that
\[x_{i+1}(t) - x_i(t) < \frac{1}{MN} \qquad \forall t \in (t_1,\, t_2]\,.   \]
Notice that the minimality of $t_1$ ensures that all particles maintain their initial order for all $t \in [0,\,t_1)$. At time $t_1$ we have $R_i(t_1)=0$ due to (Av). Substituting this value in the equation \eqref{Odes} for $x_i$, we easily see that only the terms $j<i$ survive in the nonlocal part, thus yielding $\dot{x}_i(t_1) \leq 0$. Similarly, we get $\dot{x}_{i+1}(t_1) \geq 0$. For similar reasons, if $\dot{x}_{i+1}(t_1) = 0$ then the ODE for $x_{i+1}$ implies that at time $t_i$ we have $R_{i+1}(t_1)=M$, or equivalently $x_{i+2}(t_1) - x_{i+1}(t_1)= 1/MN$. Similarly, if $\dot{x}_{i}(t_1)=0$ then $x_{i}(t_1) - x_{i-1}(t_1)= 1/MN$. Let us now assume for the moment that $x_{i+2}(t_1) - x_{i+1}(t_1)= x_{i}(t_1) - x_{i-1}(t_1) = 1/MN$. Then, with similar arguments as above one can show that $\dot{x}_{i-1}(t_1)\leq 0$ and $\dot{x}_{i+2}(t_1)\geq 0$, and we can repeat the same argument above to obtain that $\dot{x}_{i-1}(t_1)= 0$ implies $x_{i-1}(t_1) - x_{i-2}(t_1) = 1/MN$ and $\dot{x}_{i+2}(t_1)=0$ implies $x_{i+3}(t_1) - x_{i+2}(t_1)= 1/MN$. Such a procedure can be iterated to conclude that there exists either some index $k\geq i$ with $\dot{x}_{k+1}(t_1)>0$ or some index $h\leq i$ such that $\dot{x}_k(t_1)<0$, otherwise any two consecutive particles would be placed at distance $1/MN$ and the system would be static for all $t \in (t_1,\,T]$, which would contradict the existence of $t_2$.

The above considerations imply that we can assume, without loss of generality, that
\[ \dot{x}_{i+1}(t_1) >0,\quad \mbox{  and  } \quad\dot{x}_i(t_1) \leq 0\,.  \]
Let $\eps_{i+1}>0$ be small enough such that $t_1+ \eps_{i+1} < t_2$, then by Taylor expansion one has
\[  x_{i+1}(t) = x_{i+1}(t_1) +  \dot{x}_{i+1}(t_1)(t-t_1) + o(|t-t_1|)\,,  \]
where, up to taking $\eps_{i+1}$ even smaller, the contribute $o(t-t_1)$ does not affect the sign of $\dot{x}_{i+1}(t_1)(t-t_1)$. As a consequence, $x_{i+1}(t) > x_{i+1}(t_1)$ for all $t\in (t_1,t_1 +\eps_{i+1})$ and a symmetric argument gives also $x_i(t) \leq x_i(t_1)$ for all $t\in (t_1,t_1+\eps_{i})$. In particular, we deduce that
\[ x_{i+1}(t) - x_i(t) \geq  x_{i+1}(t_1) - x_i(t_1) = \frac{1}{MN} \quad \forall t \in (t_1,\, t_1 + \min\{\eps_{i},\,\eps_{i+1} \}) \]
and this contradicts the existence of $t_2$. This argument ensures both the validity of~\eqref{MaxPrinc} and the existence of solutions for all times $t>0$.
\end{proof}

Let us consider the discrete density
\begin{equation*}
\rho(t,\,x):= \sum_{i=0}^{N-1}  R_i (t) \chi_{[x_i(t),\,x_{i+1}(t))}(x).
\end{equation*}
A straightforward consequence of Lemma \ref{lem:maximum} is that
\[\rho(t,x)\leq M\qquad \hbox{for all $(t,x)\in [0,+\infty)\times\R$}.\]
Moreover, we observe that $\rho$ has unit mass on $\R$ for all times.

As already mentioned before, a straightforward consequence of the above Maximum Principle is that the particles can never touch or cross each other. In particular, the particle $x_0$ will have no particles at its left for all times, which means that the ODE for $x_0$ will only feature terms with $j>0$ on the nonlocal sum. A symmetric statement holds for $x_N$. As a consequence of that $\dot{x}_0(t) \geq 0$ and $\dot{x}_N(t) \leq 0$ for all $t$, thus the support of $\rho^N(t,\,\cdot)$ is bounded by $\ell$ uniformly in $N$ and $t$.
We summarize this property in the next lemma.

\begin{lemma}\label{lem:support}
Under the same assumptions of Lemma \ref{lem:maximum}, the support of $\rho(t,\cdot)$ is contained in the interval $[\bar{x}_0,\bar{x}_N]$ for all times $t\in [0,+\infty)$.
\end{lemma}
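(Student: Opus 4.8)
The plan is to exploit the fact that, by construction, the discrete density $\rho(t,\cdot)$ defined just above vanishes outside the interval $[x_0(t),x_N(t)]$, since it is a sum of characteristic functions of the intervals $[x_i(t),x_{i+1}(t))$. Hence it suffices to control the two extremal trajectories $x_0(t)$ and $x_N(t)$ and to show that $x_0(t)\geq \bar{x}_0$ and $x_N(t)\leq \bar{x}_N$ for all $t\geq 0$. Since Lemma \ref{lem:maximum} guarantees global existence together with the preservation of the ordering $x_0(t)<x_1(t)<\cdots<x_N(t)$, the whole argument reduces to a sign analysis of the two velocities $\dot{x}_0$ and $\dot{x}_N$.

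First I would treat the leftmost particle $x_0$. Because no particle ever lies to the left of $x_0$, the sum over $j<0$ in \eqref{Odes} is empty, so
\[
\dot{x}_0(t)=-\frac{v(R_0(t))}{N}\sum_{j>0}K'(x_0(t)-x_j(t)).
\]
For every $j>0$ the order-preservation gives $x_0(t)-x_j(t)<0$; since by (AK) $K$ is even and $K'(x)>0$ for $x>0$, the derivative $K'$ is odd and therefore $K'(x_0(t)-x_j(t))<0$. Moreover assumption (Av) forces $v\geq 0$ on $[0,+\infty)$, whence $v(R_0(t))\geq 0$. Combining these signs yields $\dot{x}_0(t)\geq 0$, so $t\mapsto x_0(t)$ is nondecreasing and $x_0(t)\geq x_0(0)=\bar{x}_0$ for all $t\geq 0$.

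A completely symmetric argument handles the rightmost particle $x_N$: here the sum over $j>N$ is empty, leaving
\[
\dot{x}_N(t)=-\frac{v(R_{N-1}(t))}{N}\sum_{j<N}K'(x_N(t)-x_j(t)),
\]
where now $x_N(t)-x_j(t)>0$ and hence $K'(x_N(t)-x_j(t))>0$, so that $\dot{x}_N(t)\leq 0$ and $x_N(t)\leq \bar{x}_N$ for all $t$. Together these two monotonicity facts give $[x_0(t),x_N(t)]\subseteq[\bar{x}_0,\bar{x}_N]$, which is the claim. I do not expect any genuine obstacle in this lemma: it is a direct corollary of the discrete maximum principle, and the only point requiring care is the bookkeeping of signs, namely the oddness and positivity of $K'$ from (AK) together with the nonnegativity of $v$ from (Av), both applied after invoking the ordering preserved by Lemma \ref{lem:maximum}.
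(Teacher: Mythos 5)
Your proposal is correct and follows essentially the same route as the paper: the paper also deduces the lemma from the order preservation guaranteed by Lemma \ref{lem:maximum}, observing that the sum over $j<0$ (resp.\ $j>N$) is empty in the ODE for $x_0$ (resp.\ $x_N$), so that the oddness and sign of $K'$ together with $v\geq 0$ give $\dot{x}_0\geq 0$ and $\dot{x}_N\leq 0$, hence $[x_0(t),x_N(t)]\subseteq[\bar{x}_0,\bar{x}_N]$. Your write-up merely makes explicit the sign bookkeeping that the paper states in passing, and it is accurate.
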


\section{Convergence of particle scheme}\label{sec:convergence}

We now focus on the converge of the particle scheme \eqref{Odes}, where the initial condition \eqref{eq:dscr_IC} is constructed from an $L^\infty(\R)$ initial density $\bar\rho$ having compact support and finite total variation.

The proof of Theorem~\ref{main} relies on two main steps: the first one consists in proving that the discrete density $(\rho^N)$ defined in \eqref{eq:discrete_density} is strongly convergent (up to a subsequence) to a limit $\rho$ in $L^1([0,T] \times \R)$, the second one is to show that the limit $\rho$ is a weak entropy solution of~\eqref{CauchyProblem} according to Definition \ref{solentropicadef}. In this section we take care of the former step. As we will show in Propositions~\ref{totalvariation} and~\ref{continuitytime} below, the sequence $(\rho_N)_{N\in \N}$ satisfies good compactness properties with respect to the space variables but, on the other hand, we cannot reach a uniform $L^1$ control on the time oscillations. In our case, we are only able to prove a uniform time continuity estimate with respect to the $1$-Wasserstein distance (see \cite{villani_book}), which nevertheless will suffice to achieve the required compactness in the product space. Such a strategy recalls the one used in \cite{DiFra-Rosini} for the case of a scalar conservation law. The main result of this section is the content of the following

\begin{theorem}\label{convergence}
Under the assumptions of Theorem~\ref{main}, the sequence $\rho^N$ is strongly relatively compact in $L^1([0,T] \times \R)$
\end{theorem}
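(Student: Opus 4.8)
The plan is to reduce the strong $L^1([0,T]\times\R)$ compactness of $\{\rho^N\}$ to three uniform-in-$N$ ingredients and then combine them through a generalized Aubin--Lions--Simon criterion. First I would collect the bounds already available: the discrete maximum principle (Lemma~\ref{lem:maximum}) gives $0\le \rho^N(t,\cdot)\le M$ for all $t$ and $N$, while Lemma~\ref{lem:support} confines $\mathrm{supp}\,\rho^N(t,\cdot)\subset[\bar x_{min},\bar x_{max}]$ uniformly. Together with conservation of unit mass, these already provide a uniform bound in $L^\infty([0,T];L^1(\R))$ and uniform tightness, so the only missing pieces are a spatial regularity estimate and a time-regularity estimate.

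The spatial ingredient is a uniform total variation bound $\mathrm{TV}(\rho^N(t,\cdot))\le C(T)$ for all $t\in[0,T]$ and all $N$, which is the content of Proposition~\ref{totalvariation}. To prove it I would write the discrete total variation as $\sum_i|R_i(t)-R_{i-1}(t)|$, differentiate it along the flow~\eqref{Odes}, and estimate the resulting expression. Since $K'$ is odd and increasing and $v$ is monotone, many terms carry a favourable sign; the dangerous contributions come from the attractive nature of the interaction, which may force the total variation to grow. The key point is to show that this growth is at worst exponential, so that a Gr\"onwall argument yields a bound finite on every compact time interval and rules out finite-time blow-up of $\mathrm{TV}$.

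The time ingredient is a uniform Lipschitz-in-time estimate in the $1$-Wasserstein distance, $W_1(\rho^N(t,\cdot),\rho^N(s,\cdot))\le L\,|t-s|$, which is Proposition~\ref{continuitytime}. This follows from a uniform bound on the particle speeds: on the fixed compact support $K'$ is bounded, $v\le v_{max}$, and the masses sum to $1$, so each $|\dot x_i(t)|\le L$ with $L$ independent of $N$. Transporting the piecewise-constant density $\rho^N$ from time $s$ to time $t$ along the particle trajectories is an admissible plan, whence $W_1$ is controlled by the maximal displacement $\max_i|x_i(t)-x_i(s)|\le L|t-s|$.

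Finally I would combine the three estimates. The family $\{\rho^N\}$ is bounded in $L^\infty([0,T];X)$ with $X=\{u\in BV(\R):0\le u\le M,\ \mathrm{supp}\,u\subset[\bar x_{min},\bar x_{max}]\}$, and $X$ embeds compactly into $L^1(\R)$, while the $W_1$-Lipschitz bound furnishes equicontinuity in time valued in the weaker Wasserstein topology; a generalized Aubin--Lions--Simon lemma then yields relative compactness in $L^1([0,T]\times\R)$. Concretely, this can also be seen through the Fr\'echet--Kolmogorov criterion: the space-translation estimate $\int_0^T\|\rho^N(t,\cdot+h)-\rho^N(t,\cdot)\|_{L^1}\,dt\le |h|\int_0^T\mathrm{TV}(\rho^N(t,\cdot))\,dt$ is controlled by the BV bound, whereas the time-translation estimate is handled by interpolating the small $W_1$-distance against the uniform $\mathrm{TV}$ bound, using that $W_1(f,g)=\|F-G\|_{L^1}$ for the cumulative distributions $F,G$ together with a bound of $\|f-g\|_{L^1}$ in terms of $\mathrm{TV}(f-g)$ and $W_1(f,g)$. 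I expect the genuine obstacle to be Proposition~\ref{totalvariation}: controlling the possible growth of the total variation induced by the attractive interaction is the delicate estimate, whereas the maximum principle, the support bound and the $W_1$-continuity are comparatively routine.
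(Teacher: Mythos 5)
Your proposal is correct and follows essentially the same route as the paper: the same decomposition into a uniform BV/support bound (Proposition~\ref{totalvariation}, proved by differentiating the discrete total variation and applying Gr\"onwall) and a uniform $W_1$-Lipschitz-in-time estimate (Proposition~\ref{continuitytime}, obtained from the uniform bound on particle speeds), combined through the generalized Aubin--Lions lemma (Theorem~\ref{thm:aubin}). The only cosmetic differences are that the paper proves the $W_1$ estimate via pseudo-inverses rather than an explicit transport plan, and your alternative Fr\'echet--Kolmogorov closing argument is an optional variant of the same compactness step.
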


The proof of Theorem~\ref{convergence} relies on a generalized statement of the celebrated Aubin-Lions Lemma (see \cite{RS,DiFra-Matthes,DiFra-Fagioli-Rosini}) that we recall here for the reader's convenience. In what follows, $d_1$ is the $1$-Wasserstein distance.

\begin{theorem}[Generalized Aubin-Lions Lemma]\label{thm:aubin}
Let $\tau>0$ be fixed. Let $\eta^N$ be a sequence in $L^{\infty}((0,\,\tau); L^1(\R))$ such that
$\eta^N(t,\,\cdot) \geq 0$ and $\| \eta^N(t,\,\cdot) \|_{L^1(\R)}=1$ for every $N\in\N$ and $t\in [0,\,\tau]$.
If the following conditions hold
\begin{enumerate}
\item[I)] $\sup_{N}  \int_0^{\tau} \left[\|\eta^N(t,\,\cdot)\|_{L^1(\R)}dt + TV\big[ \eta^N(t,\,\cdot)\big]+ \mathrm{meas}(\mathrm{supp}[\eta^N(t,\cdot)])\right]dt < \infty$,
\item[II)] there exists a constant $C>0$ independent from $N$ such that $d_{W^1}\big( \eta^N(t,\,\cdot), \eta^N(s,\,\cdot) \big) < C |t-s|$ for all $s,\,t \in (0,\,\tau)$,
\end{enumerate}
then $\eta^N$ is strongly relatively compact in $L^1([0,\,\tau]\times \R)$.
\end{theorem}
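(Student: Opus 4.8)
The plan is to deduce the statement from the classical Riesz--Fréchet--Kolmogorov compactness criterion in $L^1([0,\tau]\times\R)$, which asks for three things, all uniform in $N$: a uniform $L^1$ bound, tightness (no escape of mass to spatial infinity), and equicontinuity of the family under translations in the $t$ and $x$ variables separately. The $L^1$ bound is immediate, since $\int_0^\tau\|\eta^N(t,\cdot)\|_{L^1(\R)}\,dt=\tau$ by the mass normalization. The whole difficulty is concentrated in the translation estimates; the spatial one is elementary, while the temporal one is the heart of the matter, because hypothesis II) controls time oscillations only in the \emph{weak} metric $d_{W^1}$, whereas the criterion demands control in the \emph{strong} $L^1$ metric.

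First I would dispatch the spatial translations and tightness. For $h\in\R$ and a.e.\ $t$, the elementary $BV$ estimate $\|g(\cdot+h)-g\|_{L^1(\R)}\le|h|\,TV[g]$, applied to $g=\eta^N(t,\cdot)$ and integrated in time, gives
\[
\int_0^\tau\|\eta^N(t,\cdot+h)-\eta^N(t,\cdot)\|_{L^1(\R)}\,dt\le|h|\int_0^\tau TV[\eta^N(t,\cdot)]\,dt\le C|h|,
\]
which tends to $0$ uniformly in $N$ by I). Tightness follows from the control on the supports: the uniform bound on $\mathrm{meas}(\mathrm{supp}[\eta^N(t,\cdot)])$ in I), together with the Lipschitz-in-time bound II) (which forbids the mass from drifting at a rate depending on $N$), confines $\mathrm{supp}[\eta^N(t,\cdot)]$ to a fixed bounded region for all $t\in[0,\tau]$, so that $\int_0^\tau\int_{|x|>R}\eta^N\,dx\,dt$ vanishes for $R$ large, uniformly in $N$.

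The decisive step is equicontinuity in time, where I would upgrade the $d_{W^1}$ modulus of II) to an $L^1$ modulus by interpolating it against the spatial $BV$ regularity of I). In one space dimension $d_{W^1}(f,g)=\|F-G\|_{L^1(\R)}$, with $F,G$ the cumulative distributions of $f,g$; setting $u=F-G$ one has $u'=f-g$, $\|u\|_{L^1(\R)}=d_{W^1}(f,g)$, and $TV[u']=TV[f-g]\le TV[f]+TV[g]$. The one-dimensional Landau--Kolmogorov (Gagliardo--Nirenberg) interpolation $\|u'\|_{L^1(\R)}^2\le C\,\|u\|_{L^1(\R)}\,TV[u']$ then yields the key inequality
\[
\|f-g\|_{L^1(\R)}\le C\,\big(d_{W^1}(f,g)\big)^{1/2}\big(TV[f]+TV[g]\big)^{1/2}.
\]
Applying this with $f=\eta^N(t+h,\cdot)$ and $g=\eta^N(t,\cdot)$, inserting II) in the form $d_{W^1}\le Ch$, integrating in $t$, and using Cauchy--Schwarz together with the integrated bound on $\int_0^\tau TV[\eta^N(t,\cdot)]\,dt$ from I), I get
\[
\int_0^{\tau-h}\|\eta^N(t+h,\cdot)-\eta^N(t,\cdot)\|_{L^1(\R)}\,dt\le C\sqrt{h}\,\Big(\int_0^\tau TV[\eta^N(t,\cdot)]\,dt\Big)^{1/2}\tau^{1/2}\le C'\sqrt{h},
\]
uniformly in $N$, the boundary layer $t\in[\tau-h,\tau]$ contributing only $O(h)$ by the mass bound. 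The three estimates together verify the Riesz--Kolmogorov criterion and give the claimed relative compactness, in the spirit of the references recalled above. I expect the interpolation inequality --- and in particular making it cooperate with the merely time-\emph{integrated} (rather than uniform-in-time) total variation bound through the Cauchy--Schwarz split --- to be the only genuinely delicate point, since this is exactly where the weak Wasserstein-in-time regularity is converted into the strong $L^1$ compactness needed in the product space.
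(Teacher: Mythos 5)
The paper does not actually prove this lemma --- it imports it from \cite{RS,DiFra-Matthes,DiFra-Fagioli-Rosini}, where the argument runs through Rossi--Savar\'e's abstract compactness theorem for evolution problems (a coercive lower semicontinuous functional plus a compatible pseudo-distance). Your route is therefore genuinely different: a direct verification of the Riesz--Fr\'echet--Kolmogorov criterion. Its decisive ingredient, the interpolation bound
\[
\|f-g\|_{L^1(\R)}\;\le\; C\,d_{W^1}(f,g)^{1/2}\,\big(TV[f]+TV[g]\big)^{1/2},
\]
is correct: writing $u=F-G$ for the difference of the primitives, one has $u'=f-g$, $\|u\|_{L^1}=d_{W^1}(f,g)$, $\|u''\|_{\mathcal{M}}=TV[f-g]$, and the $L^1$ Landau--Kolmogorov estimate $\|u'\|_{L^1}\le \frac{2}{\ell}\|u\|_{L^1}+\ell\,\|u''\|_{\mathcal{M}}$, valid for every $\ell>0$ (average over partitions of $\R$ into intervals of length $\ell$), gives the claim after optimizing in $\ell$. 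Your Cauchy--Schwarz step is exactly what is needed to exploit the merely time-integrated $TV$ bound in I), and together with the elementary spatial estimate $\|g(\cdot+h)-g\|_{L^1}\le|h|\,TV[g]$ this yields a correct, self-contained proof of the translation equicontinuity, which is the hard part of the statement; it even buys an explicit modulus of order $\sqrt{h}$ in time, which the abstract route does not provide.

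There is, however, a genuine gap in your tightness step. Conditions I) and II) are invariant under $N$-dependent spatial translations, so they cannot ``confine $\mathrm{supp}[\eta^N(t,\cdot)]$ to a fixed bounded region'': II) prevents the mass from drifting in time, but says nothing about where it sits at time $0$. Concretely, $\eta^N(t,x):=\mathbf{1}_{[N,N+1]}(x)$ satisfies I) and II) (with $TV\equiv 2$, support measure $1$, and $d_{W^1}$ between any two time slices equal to $0$), yet $\|\eta^N-\eta^M\|_{L^1([0,\tau]\times\R)}=2\tau$ for $N\neq M$, so no subsequence converges strongly. Thus what your estimates actually prove is relative compactness in $L^1([0,\tau]\times[-R,R])$ for every $R<\infty$, i.e.\ in $L^1_{loc}$, and the example shows that the lemma as literally stated cannot hold without an extra uniform localization hypothesis (supports contained in a common compact set, as is implicitly the case both in the cited references and in this paper, where Lemma \ref{lem:support} supplies exactly this bound for $\rho^N$). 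Your proof becomes complete once that hypothesis is added --- tightness is then immediate --- but it cannot be derived from I) and II) alone, so the sentence claiming so must be replaced by the additional assumption (or the conclusion weakened to $L^1_{loc}$ compactness).
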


In view of Theorem \ref{thm:aubin}, the result in Theorem \ref{convergence} will follow as a consequence of the following two propositions.

\begin{prop}\label{totalvariation}
Let $\bar{\rho},\,v,\,K$ and $T$ be as in the statement of Theorem~\ref{main}. Then, there exists a positive constant $C>0$ (only depending on $K$, $v$, and on $\mathrm{supp}(\bar\rho)$) such that for every $N \in \N$ one has
\begin{equation}\label{TV}
TV[\rho^N(t,\,\cdot)] \leq TV[\bar{\rho}] e^{Ct} \qquad \hbox{for all $t \in [0,T]$}\,.
\end{equation}
\end{prop}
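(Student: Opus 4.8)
The plan is to track the evolution of the discrete total variation $TV[\rho^N(t,\cdot)] = \sum_{i} |R_i(t) - R_{i-1}(t)|$ and show it satisfies a Gronwall-type differential inequality. First I would observe that, away from the instants where two consecutive differences $R_i - R_{i-1}$ and $R_{i+1} - R_i$ change relative sign, the map $t \mapsto TV[\rho^N(t,\cdot)]$ is piecewise $C^1$, so I can differentiate $|R_i - R_{i-1}|$ using $\mathrm{sign}(R_i - R_{i-1})$ as a multiplier. Since $R_i = \tfrac{1}{N(x_{i+1}-x_i)}$, one has $\dot R_i = -N R_i^2 (\dot x_{i+1} - \dot x_i)$, so the whole computation reduces to controlling the differences of consecutive velocities $\dot x_{i+1} - \dot x_i$ coming from the ODE system \eqref{Odes}.

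Next I would expand $\dot x_{i+1} - \dot x_i$ explicitly. Writing the velocity as a forward part weighted by $v(R_i)$ and a backward part weighted by $v(R_{i-1})$, the difference splits into several contributions. The key observation is that the nonlocal kernel terms $\tfrac{1}{N}\sum_j K'(x_i - x_j)$ are, by the regularity in (AK) and the uniform bound on the support from Lemma \ref{lem:support}, bounded discrete Riemann sums approximating $K' \ast \rho^N$; their consecutive differences are controlled by $\|K''\|_{L^\infty}$ on the relevant compact set times the particle spacing, hence by quantities that are uniformly bounded in $N$. The genuinely delicate contribution is the one in which the mobility coefficients differ, producing terms of the form $(v(R_i) - v(R_{i-1}))$ multiplied by a nonlocal sum; here I would use the mean value theorem to write $v(R_i) - v(R_{i-1}) = v'(\xi)(R_i - R_{i-1})$, so this term feeds back $|R_i - R_{i-1}|$ against a bounded coefficient and thus contributes linearly to the Gronwall estimate.

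The main obstacle I anticipate is exactly the sign of this feedback term. Because the interaction is \emph{attractive} (as the authors warn in the introduction), the naive estimate would give a term that could increase $TV$ without an obvious sign control, so one cannot simply drop it. The strategy must be to exploit that $K'(x_i - x_j)$ has a definite sign depending on whether $j>i$ or $j<i$, combined with the monotonicity $v' < 0$ from (Av), so that after summing over $i$ and collecting the $\mathrm{sign}(R_i - R_{i-1})$ factors, the dangerous contributions either telescope, cancel in pairs, or can be bounded by $C\, TV[\rho^N(t,\cdot)]$ with a constant $C$ depending only on $\|K'\|_{L^\infty}$, $\|K''\|_{L^\infty}$, $v$, and the support length. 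Care is needed at the boundary indices $i=0$ and $i=N-1$, where fewer terms are present; Lemma \ref{lem:support} guarantees the relevant evaluation points stay in a fixed compact set so the kernel bounds are uniform.

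Finally, having established
\[
\frac{d}{dt} TV[\rho^N(t,\cdot)] \leq C\, TV[\rho^N(t,\cdot)]
\]
for a constant $C$ independent of $N$ (at every time of differentiability, with the inequality persisting across the finitely many crossing times since $TV$ is continuous and can only jump downward at such instants), I would integrate via Gronwall's lemma to obtain $TV[\rho^N(t,\cdot)] \leq TV[\bar\rho]\, e^{Ct}$. The last point to check is that the initial discrete total variation $TV[\rho^N(0,\cdot)]$ is bounded by $TV[\bar\rho]$, which follows from the construction \eqref{eq:dscr_IC} of the initial particle positions as equal-mass quantiles of $\bar\rho$.
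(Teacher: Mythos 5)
Your overall skeleton---differentiating the discrete total variation against sign multipliers, reducing everything to $\dot R_i=-NR_i^2(\dot x_{i+1}-\dot x_i)$ via \eqref{Odes}, splitting the result into kernel-difference and mobility-difference contributions, and closing with Gronwall together with $TV[\rho^N(0,\cdot)]\le TV[\bar\rho]$---is exactly the paper's. The genuine gap is in your treatment of the mobility-difference term. You propose to write $v(R_i)-v(R_{i-1})=v'(\xi)(R_i-R_{i-1})$ and claim this feeds back $|R_i-R_{i-1}|$ ``against a bounded coefficient''. That coefficient is \emph{not} bounded uniformly in $N$: it is of order $N$. The point is that passing from $\dot x_{i+1}-\dot x_i$ to $\dot R_i$ costs a factor $NR_i^2$ (see \eqref{eq:explcit_Rdot}). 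For the kernel-difference terms this factor is compensated, because each difference $K'(x_{i+1}-x_j)-K'(x_i-x_j)$ is of order of the particle spacing $x_{i+1}-x_i=1/(NR_i)$; this is why that part of your plan works. But the mobility difference $v(R_{i+1})-v(R_i)$ carries no such smallness, so its contribution to $\dot R_i$ is bounded only by $N\,\mathrm{Lip}(v)\,M^2\,\|K'\|_{L^\infty}\,|R_{i+1}-R_i|$, and your Gronwall inequality comes out with constant $CN$, giving $TV[\rho^N(t,\cdot)]\le TV[\bar\rho]e^{CNt}$. This is useless for the compactness argument: Proposition~\ref{totalvariation} requires $C$ independent of $N$.

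The paper's resolution of this term is not a magnitude estimate at all, and it is none of the three fallback mechanisms you list (telescoping, pairwise cancellation, bound by $C\,TV$): it is a pointwise-in-$i$ sign argument that lets one \emph{discard} the term entirely. The multiplier $\mu_i=\mathrm{sign}(R_i-R_{i+1})-\mathrm{sign}(R_{i-1}-R_i)$ vanishes unless $R_i$ is a local extremum of the discrete density, so only those indices contribute. At a local minimum ($\mu_i=-2$) monotonicity of $v$ gives $v(R_{i+1})-v(R_i)<0$ and $v(R_i)-v(R_{i-1})>0$, while (AK) gives $\sum_{j>i+1}K'(x_{i+1}-x_j)<0$ and $\sum_{j<i}K'(x_i-x_j)>0$; hence the whole mobility-difference block $I_i$ is negative and the corresponding contribution $-\mu_iR_i^2I_i$ to $\frac{d}{dt}TV$ is negative, and symmetrically at a local maximum. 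So these potentially $O(N)$-sized terms have the favourable sign precisely at the indices where they are seen by the total variation, and they are simply dropped; only the kernel-difference terms and the two boundary terms $\mu_0\dot R_0$, $\mu_N\dot R_N$ are estimated in size, each by $C\,TV[\rho^N(t,\cdot)]$ with $C$ depending only on $v_{max}$ and the Lipschitz constant of $K'$ on the uniformly bounded support. You name the right ingredients (the sign of $K'$ according to $j>i$ or $j<i$, and $v'<0$), but without the local-extremum observation your argument does not close, and the step you do spell out concretely (the mean value theorem bound) would fail.
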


\begin{prop}\label{continuitytime}
Let $\bar{\rho},\,v,\,K$ and $T$ be as in the statement of Theorem~\ref{main}. Then, there exists a positive constant $C>0$ (only depending on $K$) such that
\begin{equation}\label{contime}
d_{W^1}\big( \rho^N(t,\,\cdot), \rho^N(s,\,\cdot) \big) < C |t-s| \quad \hbox{for all $s,\,t \in (0,\,T)$, and for all $N \in \N$}\,.
\end{equation}
\end{prop}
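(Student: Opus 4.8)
The plan is to establish the $1$-Wasserstein continuity estimate \eqref{contime} by exploiting the Lagrangian structure of the scheme, namely that in one dimension the $d_{W^1}$ distance between two densities with equal mass can be computed as the $L^1$ distance of their (pseudo-)inverse distribution functions. Concretely, since each particle $x_i(t)$ carries the same mass $1/N$, the discrete density $\rho^N(t,\cdot)$ corresponds to a monotone rearrangement whose inverse is the piecewise-constant (or piecewise-linear) interpolant of the particle positions $x_i(t)$ on the mass grid $\{i/N\}_{i=0}^N$. The key identity I would use is that for two such particle configurations $(x_i(t))$ and $(x_i(s))$ with the \emph{same} masses, one has
\[ d_{W^1}\big(\rho^N(t,\cdot),\rho^N(s,\cdot)\big) \leq \frac{1}{N}\sum_{i=0}^{N} |x_i(t) - x_i(s)|, \]
so the problem reduces to controlling the displacement of each particle in time.

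First I would reduce the right-hand side above via the fundamental theorem of calculus, writing $|x_i(t)-x_i(s)| \leq \int_s^t |\dot{x}_i(\tau)|\,d\tau$, so that it suffices to obtain a bound on $\frac{1}{N}\sum_{i=0}^N |\dot{x}_i(\tau)|$ that is uniform in $\tau$ and in $N$. Second, I would estimate the velocity from the ODE \eqref{Odes} directly: using that $0\leq v(R_i)\leq v_{max}$ by (Av) and that $|K'|$ is bounded on the relevant compact set (the support of $\rho^N(\tau,\cdot)$ lies in $[\bar x_0,\bar x_N]$ by Lemma \ref{lem:support}, so all pairwise differences $x_i-x_j$ stay in a fixed compact interval, on which the $C^2$ kernel $K'$ is bounded), each velocity is controlled by
\[ |\dot{x}_i(\tau)| \leq \frac{v_{max}}{N}\sum_{j\neq i} |K'(x_i(\tau)-x_j(\tau))| \leq v_{max}\,\|K'\|_{L^\infty([-\ell,\ell])}, \]
where $\ell = \bar x_N - \bar x_0$ bounds the diameter of the support. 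Since this bound is independent of $i$ and $N$, averaging over the $N+1$ particles yields $\frac{1}{N}\sum_i |\dot{x}_i(\tau)| \leq C$ with $C$ depending only on $K$ (through $\|K'\|_\infty$ on the fixed compact), $v_{max}$, and the support length. Combining with the previous step gives \eqref{contime}.

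The only genuinely delicate point is the reduction identity relating $d_{W^1}$ of the reconstructed densities to the $\ell^1$-type sum of particle displacements; I expect this to be the main obstacle, mostly at the level of bookkeeping. One must verify carefully that since both configurations share the identical mass distribution $1/N$ per particle and the particles remain ordered for all times (a consequence of the Discrete Maximum Principle, Lemma \ref{lem:maximum}), the optimal transport plan moving $\rho^N(s,\cdot)$ onto $\rho^N(t,\cdot)$ need not split mass across particles, so that the monotone (order-preserving) coupling $x_i(s)\mapsto x_i(t)$ is admissible and gives the stated upper bound on the $d_{W^1}$ cost. I would state this explicitly as the pseudo-inverse characterization of $d_{W^1}$ in one dimension, noting that the pseudo-inverses of $\rho^N(t,\cdot)$ and $\rho^N(s,\cdot)$ differ, on each mass-subinterval $[i/N,(i+1)/N)$, by a quantity controlled by $\max_i |x_i(t)-x_i(s)|$, so that their $L^1$ distance is bounded by $\frac1N \sum_i |x_i(t)-x_i(s)|$ up to harmless constants. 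Once this reduction is in place, the remaining velocity estimate is routine, and the constant $C$ depends only on $K$ as asserted.
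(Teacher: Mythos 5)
Your proposal is correct and follows essentially the same route as the paper: both invoke the one-dimensional identity $d_1(\mu,\nu)=\|X_\mu-X_\nu\|_{L^1([0,1])}$ for pseudo-inverses of piecewise-constant densities, reduce the estimate to the particle displacements $|x_i(t)-x_i(s)|\le\int_s^t|\dot x_i(\tau)|\,d\tau$, and conclude from the uniform velocity bound $|\dot x_i|\le v_{max}\sup|K'|$ on the time-independent compact support guaranteed by Lemmas \ref{lem:maximum} and \ref{lem:support}. The only difference is bookkeeping: since the pseudo-inverses are piecewise linear with nodal values $x_i$, your convexity argument yields $d_1\le \frac1N\sum_i|x_i(t)-x_i(s)|$ directly (and your precise statement of it via the interpolation bound on each mass-subinterval is valid), whereas the paper splits the difference into a nodal term plus a slope term, estimates $\bigl|\frac{d}{d\tau}(1/R_i^N)\bigr|\le N(|\dot x_i|+|\dot x_{i+1}|)$, and arrives at the same conclusion with a slightly larger constant.
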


The remaining part of this section is devoted to prove Propositions~\ref{totalvariation} and~\ref{continuitytime}.
For future use we compute
\begin{align}
\dot{R}_i(t) =& -N(R_i)^2 (\dot{x}_{i+1} - \dot{x}_i) = -N(R_i)^2 \Big[ -2v(R_i)\frac{1}{N}K'(x_{i+1} - x_i) \nonumber  \\
& -(v(R_{i+1}) - v(R_i))\frac{1}{N}\sum_{j > i+1} K'(x_{i+1} - x_j)\nonumber\\
& - v(R_i) \frac{1}{N}\sum_{j > i+1}\big( K'(x_{i+1} - x_j) - K'(x_i - x_j) \big) \nonumber\\
&  -(v(R_i) - v(R_{i-1}))\frac{1}{N}\sum_{j<i} K'(x_i - x_j) - v(R_i)\frac{1}{N}\sum_{j<i} \big(K'(x_{i+1} - x_j) - K'(x_i - x_j) \big) \Big]\,.\label{eq:explcit_Rdot}
\end{align}

\proofof{Proposition~\ref{totalvariation}}
It is easy to see that $TV[\rho^N(0,\,\cdot)] \leq TV[\bar{\rho}]$. Then estimate~\eqref{TV} follows by Gronwall Lemma as soon as we show that
\begin{equation}\label{dTVlimitata}
 \frac{d}{dt} TV[\rho^N(t,\,\cdot)] \leq C \,TV[\rho^N(t,\,\cdot)],
\end{equation}
for a suitable constant $C>0$. The total variation of $\rho^N$ at time $t$ is given by
\begin{align*}
T&V[\rho^N(t,\,\cdot)] = R_0(t) + R_{N}(t) + \sum_{i=0}^{N-1} |R_{i+1}(t) - R_i(t)| \\
&=\sum_{i=1}^{N-1}R_i [\mathrm{sign}(R_i - R_{i-1}) - \mathrm{sign}(R_{i+1}-R_i)] - R_0 (\mathrm{sign}(R_1-R_0) -1)\\
&\, + R_N( \mathrm{sign}(R_N - R_{N-1}) + 1) \\
&=\mu_0(t)R_0(t)+\mu_N(t)R_N +  \sum_{i=1}^{N-1}R_i \mu_i,
\end{align*}
where we set for brevity
\begin{align*}
 & \mu_i(t):=\mathrm{sign}(R_i(t) - R_{i+1}(t)) - \mathrm{sign}(R_{i-1}(t) - R_i(t))\qquad i=1,\ldots,N-1,\\
 & \mu_0(t)= \big( 1- \mathrm{sign}(R_1 -R_0) \big),\\
 & \mu_N(t)=\big(1+ \mathrm{sign}(R_N -R_{N-1}) \big).
\end{align*}
Then we can compute
\begin{align*}
 \frac{d}{dt} TV[\rho^N(t,\,\cdot)] &= \dot{R}_0(t) +\dot{R}_{N}(t) + \sum_{i=0}^{N-1} \mathrm{sign}\big(R_{i+1}(t) - R_i(t)\big)\big( \dot{R}_{i+1}(t) - \dot{R}_i(t) \big) \\
&= \mu_0(t)\dot{R}_0(t) + \mu_N(t) \dot{R}_N(t) + \sum_{i=1}^{N-1} \mu(R_i(t))\dot{R}_i(t)\,.
\end{align*}
The value of the coefficient $\mu_i(t)$ clearly depends on the positions of the consecutive particles, it is easy to see that for $i \in \{ 1,\,\ldots,\,N-1\}$
\begin{equation*}
\mu_i(t)= \left\lbrace\begin{array}{lll}
-2 \quad &\mbox{if $R_{i+1} > R_i$ and $R_{i-1}> R_i$},\\
2 \quad  &\mbox{if $R_{i+1} < R_i$ and $R_{i-1}< R_i$},\\
0 \quad  &\mbox{if $R_{i+1} \geq R_i \geq R_{i-1}$ or $R_{i-1}\geq R_i \geq R_{i+1}$,}
\end{array}
\right.
\end{equation*}
moreover
\begin{equation*}
\mu_0(t)=\left\lbrace \begin{array}{ll}
0 \quad \mbox{if $R_1 < R_0$,}\\
2  \quad \mbox{if $R_1 > R_0$,}
\end{array} \right.
\qquad
\mu_N(t)= \left\lbrace\begin{array}{ll}
0 \quad \mbox{if $R_{N-1} > R_N$,}\\
2  \quad \mbox{if $R_{N-1} < R_N$.}
\end{array}\right.
\end{equation*}
Recalling \eqref{eq:explcit_Rdot}, we can rewrite
\begin{equation}\label{eq:ddt_estimate}
\frac{d}{dt} TV[\rho^N(t,\,\cdot)] = \mu_0(t)\dot{R}_0(t) + \mu_N(t) \dot{R}_N(t) - \sum_{i=1}^{N-1} \mu_i(t)(R_i(t))^2 \emph{I}_i -  \sum_{i=1}^{N-1} \mu_i(t)R_i(t) \emph{II}_i\,,
\end{equation}
where
\[ \emph{I}_i = -\big(v(R_{i+1}(t)) - v(R_i(t))\big)\sum_{j > i+1} K'(x_{i+1}(t) - x_j(t)) - \big(v(R_i(t)) - v(R_{i-1}(t))\big)\sum_{j < i} K'(x_i(t) - x_j(t))\,, \]
and
\begin{align*}
 &  \emph{II}_i = -R_i(t)v(R_i(t)) \sum_{j \neq i,\,i+1} \big( K'(x_{i+1}(t) - x_j(t)) - K'(x_i(t) - x_j(t)) \big)\\
 & \,\,  - 2R_i(t)v(R_i(t))K'(x_{i+1}(t)-x_i(t))\,.
\end{align*}
Let us first estimate $-\sum_{i=1}^{N-1}\mu_i(t)(R_i(t))^2\emph{I}_i$ in \eqref{eq:ddt_estimate}. Clearly, the only relevant contributions in the sum come from the particles $x_i$ for which $\mu_i(t)\neq 0$. However, if the index $i$ is such that $\mu_i(t)=-2$, then $R_{i+1}, R_{i-1} > R_i$ and the monotonicity of $v$ implies
\[ v(R_{i+1}(t)) - v(R_i(t))  <0\,, \quad\mbox{ and }\quad v(R_i(t))-v(R_{i-1}(t)) >0\,.   \]
The assumption (AK) on $K$ ensures that $\emph{I}_i < 0$, thus, on the other hand, $\mu_i(t)(R_i (t))^2 \emph{I}_i  <0$.
An analogous argument implies that, if $i$ such that $\mu_i(t)=2$, then $\emph{I}_i > 0$ and $2(R_i (t))^2 \emph{I}_i  >0$. These considerations lead immediately to
\begin{equation}\label{stuck[I]}
-\sum_{i=1}^{N-1}\mu_i(t)(R_i(t))^2\emph{I}_i < 0\,.
\end{equation}
Let us now focus on $-\sum_{i=1}^{N-1}\mu_i(t)R_i(t)\emph{II}_i$. In this case, we would like to obtain an upper bound in terms of $TV[\rho^N(t,\,\cdot)]$ and for this purpose we need to estimate $| II_i |$. We recall that $K'$ is locally Lipschitz and that $v(\rho)\in [0,v_{max}]$. The former in particular implies that $K'$ has finite Lipschitz constant on the compact interval $[-2\mathrm{meas}(\mathrm{supp}(\bar\rho)),2\mathrm{meas}(\mathrm{supp}(\bar\rho))]$, we name such a constant $L=L(\bar\rho)$. We get
\begin{align*}
|\emph{II}_i| &=   R_i(t)|v(R_i(t))| \left| -\sum_{j \neq i,\,i+1} \big(K'(x_{i+1}(t) - x_j(t)) - K'(x_i(t) - x_j(t))  \big)  -2K'(x_{i+1}(t) - x_i(t))\right| \\
&\leq R_i(t)\,L\,v_{max} \frac{N-2}{N} \frac{1}{R_i(t)} + 2v_{max}\,L\frac{1}{N} \leq L\,v_{max}\,,
\end{align*}
and this gives
\begin{equation}\label{stuck[II]}
\left|  -\sum_{i=1}^{N-1}\mu_i(t)R_i(t)\emph{II}_i  \right| \leq L\,v_{max}\left| \sum_{i=1}^{N-1}\mu_i(t)R_i(t) \right| \leq L\,v_{max}\,TV[\rho^N(t,\,\cdot)].
\end{equation}
We can now focus on $\dot{R}_0$ and $\dot{R}_N$. Since the setting is symmetric, we only present the argument for $\mu_0(t)\dot{R}_0$ and leave the one for $\mu_N(t)\dot{R}_N$ to the reader. Since $\mu_0(t)\neq 0$ only if $R_1(t)>R_0(t)$, without restriction we can assume $(v(R_1) - v(R_0)) \leq 0$ and can compute
\begin{align*}
\mu_0\dot{R}_0 &= \mu_0R_0 [R_0v(R_1)\sum_{j>1}\big(K'(x_1-x_j) - K'(x_0-x_j)\big) +2R_0v(R_0)K'(x_1-x_0)] \\
&\quad + \mu_0(R_0)^2(v(R_1) - v(R_0))\sum_{j>1} K'(x_0 - x_j) \\
&\leq  \mu_0R_0 [R_0v(R_1)\sum_{j>1}\big(K'(x_1-x_j) - K'(x_0-x_j)\big) +2R_0v(R_0)K'(x_1-x_0)]\,.
\end{align*}
Moreover,
\[ \left| R_0v(R_1)\sum_{j>1}\big(K'(x_1-x_j) - K'(x_0-x_j)\big) +2R_0v(R_0)K'(x_1-x_0)\right| \leq v_{max}\,L\frac{N-1}{N} + \frac{2v_{max}\,L}{N}\,.  \]
In particular, $\mu_0\dot{R}_0 \leq (3CL) R_0$ and
\begin{equation}\label{primoeultimotermine}
\mu_0\dot{R}_0 + \mu(R_N)\dot{R}_N \leq 3v_{max}\,L\, (R_0 + R_N) \leq 3v_{max}\,L\, TV[\rho^N(t,\,\cdot)]\,.
\end{equation}
By putting together~\eqref{stuck[I]},~\eqref{stuck[II]} and~\eqref{primoeultimotermine} we get estimate~\eqref{dTVlimitata} and~\eqref{TV} follows as a consequence of Gronwall Lemma.
\end{proof}

We now prove the equi-continuity w.r.t. time with respect to the $1$-Wasserstein distance for $\rho^N$.

\proofof{Proposition~\ref{continuitytime}}
Assume without loss of generality that $0< s<t < T$.
Our goal then is to investigate the continuity in time of the discrete density $\rho^N$ with respect to the $1$-Wasserstein distance. We exploit the well known relation between the $1$-Wasserstein distance of two probability measures and the $L^1$ distance of their respective pseudo inverse functions. More precisely, for any two probability measures $\mu,\,\nu$ the following identity holds
\[    d_{1} (\mu,\,\nu)  =  \| X_{\mu} - X_{\nu}\|_{L^1([0,\,1])}, \]
where $X_\mu$ and $X_\nu$ are the pseudo inverses of the cumulative distribution functions of $\mu$ and $\nu$ respectively.
The assertion of the proposition will follow once we prove that there exists a constant $C>0$ independent of $N$ such that
\[  \|  X_{\rho^N(t,\cdot)} - X_{\rho^N(s,\,\cdot)} \|_{L^1([0,1])} < C|t-s|, \]
for all $s,\,t \in (0,T)$.
By the definition of $\rho^N$ we can explicitly compute
\[ X_{\rho^N(t,\,\cdot)}(z) = \sum_{i=0}^{N-1} \left(x_i^N(t) + \left(z-i\frac{1}{N}\right) \frac{1}{R_i^N(t)}\right) \textbf{1}_{[i\frac{1}{N},\,(i+1)\frac{1}{N})}(z)\,. \]
Therefore,
\begin{align*}
d_{1}\big( \rho^N(t,\,\cdot), \rho^N(s,\,\cdot) \big) &= \| X_{\rho^N(t,\,\cdot)} - X_{\rho^N(s,\,\cdot)}  \|_{L^1([0,\,1])} \\
&\leq \sum_{i=0}^{N-1} \int_{i/N}^{(i+1)/N} \left| x_i^N(t) - x_i^N(s) + \left(z - \frac{i}{N} \right) \left(\frac{1}{R_i^N(t)} - \frac{1}{R_i^N(s)} \right)  \right| dz \\
&\leq \sum_{i=0}^{N-1} \frac{1}{N} |x_i^N(t) - x_i^N(s)| +  \sum_{i=0}^{N-1} \left|\frac{1}{R_i^N(t)} - \frac{1}{R_i^N(s)} \right|  \int_{i/N}^{(i+1)/N} \left(z - \frac{i}{N} \right)dz \\
&= \sum_{i=0}^{N-1} \frac{1}{N} |x_i^N(t) - x_i^N(s)|  + \sum_{i=0}^{N-1} \frac{1}{2N^2} \int_s^t \left| \frac{d}{d\tau} \frac{1}{R_i^N(\tau)}\right| d\tau \\
&\leq 3 \sum_{i=0}^{N} \frac{1}{N} \int_s^t \left|\dot{x}_i^N(\tau)\right| d\tau\,,
\end{align*}
where in the last inequality we used that
\[ \left| \frac{d}{d\tau} \frac{1}{R_i^N(\tau)}  \right| = N |\dot{x}_{i+1}^N(\tau) - \dot{x}_i^N(\tau)| \leq N |\dot{x}_{i+1}^N(\tau)| + N|\dot{x}_i^N(\tau)|\,. \]
Notice that we can control $|\dot{x}_i^N(\tau)|$ uniformly in $N$ and in $\tau$. Indeed, recalling the assumption (AK), setting $L$ as the Lipschitz constant of $K'$ on the interval $[-2\mathrm{meas}(\mathrm{supp}(\bar\rho)),2\mathrm{meas}(\mathrm{supp}(\bar\rho))]$ as in the proof of Proposition \ref{totalvariation}, we have
\[  |\dot{x}_i^N(\tau)| = \frac{1}{N}\left|-v(R_i(t)) | \sum_{j>i} K'(x_i - x_j) - v(R_{i-1})\sum_{j<i} K'(x_i - x_j) \right| \leq \frac{2LN v_{max}}{N} = 2L v_{max},, \]
which gives
\[ d_{1}\big( \rho^N(t,\,\cdot), \rho^N(s,\,\cdot) \big) \leq 6L v_{max}\,|t-s| \sum_{i=0}^N \frac{1}{N} \leq 12L v_{max} |t-s|,  \]
and~\eqref{contime} is proven.
\end{proof}

\section{Consistency of the many particle scheme: convergence to entropy solution}\label{sec:consi}

In this section we show that the limit $\rho$ obtained in Section \ref{sec:convergence} satisfies the entropy condition in the sense of Definition~\ref{solentropicadef}. Moreover, we can prove that $\rho$ is the unique entropy solution of the Cauchy problem
\begin{equation}
\left\lbrace\begin{array}{ll}
\partial_t \rho = \partial_x(\rho v(\rho) K' \ast \rho)\quad t \in (0,T],  \\
\rho(0,\cdot) = \bar{\rho}.
\end{array} \right.
\end{equation}
The first step consists in showing that the discrete densities satisfy an analogue version of the entropy condition. In order to do that, we need to introduce another sequence of approximations of the solution $\rho$, namely the $N$-empirical measure
\[ \hat{\rho}^N(t,x) := \frac{1}{N} \sum_{i=0}^N \delta_{x_i(t)}(x).\]
In the next lemma we show that $\hat{\rho}^N$ and $\rho^N$ are arbitrarily close in the $1$-Wasserstein distance, which implies that $\hat{\rho}^N$ converge up to a subsequence to the same limit $\rho$ obtained in the previous section.

\begin{lemma}\label{lem:empirical1}
  For all $N\in \mathbb{N}$, we have
  \[d_1(\rho^N(t,\cdot),\hat{\rho}^N(t))\leq \frac{C}{N},\]
  for some constant $C$ only depending on $\bar\rho$.
\end{lemma}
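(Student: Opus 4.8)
The goal is to bound the $1$-Wasserstein distance between the piecewise-constant reconstruction $\rho^N$ and the empirical measure $\hat\rho^N$ built from the same particle locations $x_0(t),\ldots,x_N(t)$. The plan is to use the characterization of $d_1$ via the $L^1$ distance of the pseudo-inverses of the cumulative distribution functions, exactly as was exploited in the proof of Proposition~\ref{continuitytime}, since both measures are probability measures of unit mass. Writing $X_{\rho^N(t,\cdot)}$ and $X_{\hat\rho^N(t)}$ for the two pseudo-inverses on $[0,1]$, it suffices to estimate $\|X_{\rho^N(t,\cdot)}-X_{\hat\rho^N(t)}\|_{L^1([0,1])}$.

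First I would write down both pseudo-inverses explicitly on each dyadic block $[i/N,(i+1)/N)$. For the reconstructed density we already have, from the proof of Proposition~\ref{continuitytime},
\[
X_{\rho^N(t,\cdot)}(z)=\sum_{i=0}^{N-1}\left(x_i(t)+\Big(z-\tfrac{i}{N}\Big)\tfrac{1}{R_i(t)}\right)\mathbf{1}_{[i/N,(i+1)/N)}(z),
\]
so on the block indexed by $i$ the pseudo-inverse interpolates linearly from $x_i(t)$ to $x_{i+1}(t)$. For the empirical measure, each atom carries mass $1/N$, so its pseudo-inverse is the staircase function equal to $x_i(t)$ on (essentially) the block of measure $1/N$ associated to the $i$-th atom. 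The key observation is then that on each block the two pseudo-inverses differ by at most the local particle gap $x_{i+1}(t)-x_i(t)$, because the linear interpolant stays between $x_i(t)$ and $x_{i+1}(t)$ while the staircase sits at $x_i(t)$ (modulo a harmless shift in indexing coming from the $N+1$ atoms versus $N$ intervals, which only affects one block and contributes an $O(1/N)$ term).

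Integrating over each block of length $1/N$ then gives
\[
\|X_{\rho^N(t,\cdot)}-X_{\hat\rho^N(t)}\|_{L^1([0,1])}\leq \sum_{i=0}^{N-1}\frac{1}{N}\big(x_{i+1}(t)-x_i(t)\big)=\frac{1}{N}\big(x_N(t)-x_0(t)\big)\leq \frac{\bar{x}_{max}-\bar{x}_{min}}{N},
\]
where the sum telescopes and the last bound uses Lemma~\ref{lem:support}, which confines the support to $[\bar x_0,\bar x_N]=[\bar x_{min},\bar x_{max}]$ uniformly in $t$. This yields the claim with $C=\bar{x}_{max}-\bar{x}_{min}$, a constant depending only on $\mathrm{supp}(\bar\rho)$.

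I expect the main obstacle to be purely bookkeeping rather than analytic: the empirical measure has $N+1$ atoms whereas the reconstruction uses $N$ intervals, so the two pseudo-inverses are defined via slightly different partitions of $[0,1]$ (mass $1/N$ per block in both cases, but offset). One must choose the alignment of the atoms to the blocks carefully so that the per-block difference is genuinely controlled by the adjacent gap $x_{i+1}(t)-x_i(t)$ and no term is double-counted; the extra atom $x_N$ contributes only to a single boundary block and is absorbed into the $O(1/N)$ estimate. Once the partitions are matched, the estimate is a one-line telescoping sum, and no use of the ODE dynamics is needed — only the uniform bound on the support.

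\begin{remark}
Note that this bound is purely geometric and holds for every fixed $t$, so together with Theorem~\ref{convergence} it guarantees that $\hat\rho^N$ converges (along the same subsequence) to the same limit $\rho$.
\end{remark}
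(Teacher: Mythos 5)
Your proof is correct and follows essentially the same route as the paper's: both pass to the pseudo-inverses, compare them block by block on $[i/N,(i+1)/N)$, telescope the resulting sum of gaps $x_{i+1}(t)-x_i(t)$, and conclude with the uniform support bound of Lemma~\ref{lem:support}. The only differences are cosmetic: the paper integrates the linear ramp exactly (obtaining the slightly sharper constant $\tfrac{1}{2}\mathrm{meas}(\mathrm{supp}(\bar\rho))$ instead of your $\bar{x}_{max}-\bar{x}_{min}$), and it silently identifies the staircase pseudo-inverse with the $N$-block partition, the atom/interval bookkeeping you handle explicitly.
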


\begin{proof}
  In view of the standard isometric mapping between the $1$-Wasserstein space of probability measures and the convex cone of non-decreasing functions in $L^1([0,1])$, similarly to the proof of proposition \ref{continuitytime}, we have
\begin{align*}
     & d_1(\rho^N(t,\cdot),\hat{\rho}^N(t))\leq \sum_{i=0}^{N-1} \int_{i/N}^{(i+1)/N}\left| \left(z-i\frac{1}{N}\right) \frac{1}{R_i^N(t)}\right|\, dz\\
     & \ = \frac{1}{2N}\sum_{i=0}^{N-1}(x^N_{i+1}(t)-x^N_i(t)) = \frac{1}{2N}\left(x^N_{N}(t)-x^N_{0}(t)\right)\\
     & \ \leq \frac{1}{2N} \mathrm{meas}(\mathrm{supp}(\bar\rho)),
  \end{align*}
  which proves the assertion.
\end{proof}

\begin{remark}\label{rem:empirical}
  \emph{Let $W\in C(\R)$ be even and locally Lipschitz. Then, there exists a constant $C>0$ depending only on $\bar\rho$ such that
  \[\sup_{t\geq 0}\|W\ast \rho^N(t,\cdot)-W\ast \hat{\rho}^N(t)\|_{L^1}\leq \frac{C}{N},\]
  for all $N\in \mathbb{N}$. To prove this, let $\gamma^N_o(t)$ be an optimal plan between $\rho^N(t,\cdot)$ and $\hat{\rho}^N(t,\cdot)$ with respect to the cost $c(x)=|x|$. We then estimate, for all $t\geq 0$,
  \begin{align*}
     & \|W\ast \rho^N-W\ast \hat{\rho}^N\|_{L^1(\R)} = \int_\R \left|\int_\R W(x-y)\, d\rho^N(t,\cdot)(y)-\int_\R W(x-y)\,d\hat{\rho}^N(t)(y)\right|\, dx\\
     & \ =\int_\R \left|\iint_{\R^2} \left(W(x-y)-W(x-z)\right)\, d\gamma_0^N(t)(y,z)\right|\, dx\\
     & \ \leq C \int_\R \iint_{\R^2}|y-z|d\gamma_0^N(t)(y,z),dx,
  \end{align*}
  where we have used that the supports of $\rho^N$ and $\hat{\rho}^N$ are contained in $\mathrm{supp}(\bar\rho)$ which is bounded and independent of time. By definition of $1$-Wasserstein distance we therefore have
  \[\|W\ast \rho^N-W\ast \hat{\rho}^N\|_{L^1(\R)} \leq C d_1(\rho^N(t,\cdot),\hat{\rho}^N(t)) \leq \frac{\tilde{C}}{N},\]
  for some suitable constant $\tilde{C}>0$ in view of Lemma \ref{lem:empirical1}.}
\end{remark}

Our next goal is to prove that the entropy inequality
\[0 \leq \int_0^T \int_{\R} |\rho^N - c|\varphi_t -  \mathrm{sign}(\rho^N - c)[(f(\rho^N) - f(c))K' \star \hat{\rho}^N \varphi_x - f(c)K'' \ast \hat{\rho}^N \varphi]dxdt\]
holds for every non-negative test function $\varphi$ with compact support in $\mathcal{C}^{\infty}_c ((0,+\infty)\times \R)$, every constant $c\geq 0$ and every $N$ \emph{large enough}. Such a goal, which requires some tedious calculations, is however not enough to prove that the limit $\rho$ of the previous section is an entropy solution because the of the discontinuity of the sign function in the above inequality, which does not allow to pass to the limit for $\rho^N\rightarrow \rho$ almost everywhere and in $L^1$. To bypass this problem we shall then introduce a $\delta$-regularization of the sign function in order to first let $N\rightarrow +\infty$ and then $\delta\searrow 0$. In the last part of the section we prove the uniqueness of entropy solutions, which allows to conclude that the whole approximating sequence $\rho^N$ converges to $\rho$, thus completing the proof of our main Theorem~\ref{main}.


\begin{lemma}\label{entropiarhoN}
For every non negative $\varphi \in C^{\infty}_c ((0,+\infty)\times \R),\,c\geq 0$ and $N \in \N$ the following inequality holds
\begin{equation}\label{entropiaN}
\liminf_{N\rightarrow +\infty}\int_0^T \int_{\R} |\rho^N - c| \varphi_t - \mathrm{sign}(\rho^N -c)[(f(\rho^N) - f(c))K' \ast \hat{\rho}^N \varphi_x - f(c)K'' \ast \hat{\rho}^N \varphi] dx dt \geq 0.
\end{equation}
\end{lemma}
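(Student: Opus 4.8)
The plan is to derive a discrete entropy inequality directly from the ODE system \eqref{Odes}, mimicking the continuous Kruzkov formulation at the level of the piecewise-constant density $\rho^N$, and then let $N\to+\infty$ using the strong $L^1$ convergence $\rho^N\to\rho$ together with Remark \ref{rem:empirical} to replace $\hat\rho^N$ by $\rho^N$ inside the nonlocal convolutions. Since $\rho^N$ is piecewise constant with values $R_i(t)$ on $[x_i(t),x_{i+1}(t))$, and the interface points $x_i(t)$ move according to \eqref{Odes}, I expect the natural strategy to be a \emph{cell-by-cell computation} of
\[
\frac{d}{dt}\int_\R |\rho^N(t,x)-c|\,\varphi(t,x)\,dx,
\]
tracking both the variation of the constant value $R_i(t)$ on each cell (governed by $\dot R_i$, which is recorded in \eqref{eq:explcit_Rdot}) and the motion of the endpoints $x_i(t)$, which produce boundary terms of Rankine--Hugoniot type. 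The discrete flux attached to each interface should be read off as the Lagrangian velocity $\dot x_i$ multiplied by the appropriate jump in $|\rho^N-c|$, and the whole computation should reassemble, after summation by parts, into a discrete analogue of $\int |\rho-c|\varphi_t - \mathrm{sign}(\rho-c)[(f(\rho)-f(c))K'\ast\rho\,\varphi_x - f(c)K''\ast\rho\,\varphi]$.

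\textbf{Key steps, in order.} First I would fix $c\ge 0$ and $\varphi$, differentiate $\int_\R|\rho^N-c|\varphi\,dx$ in time, and organize the result into three groups: a term from $\varphi_t$, interface (transport) terms from the moving endpoints $x_i(t)$, and bulk terms from $\dot R_i$. Second, at each interface $x_i(t)$ I would identify the velocity $\dot x_i$ from \eqref{Odes} with the correct discrete version of $-v(\rho)K'\ast\rho$; because the scheme uses the asymmetric choice $v(R_i)$ for $j>i$ and $v(R_{i-1})$ for $j<i$, the discrete numerical flux should be \emph{monotone} in the appropriate sense, and this monotonicity is exactly what produces the correct sign (the entropy-admissible inequality rather than an equality). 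Third, I would use $\mathrm{sign}(\rho^N-c)$ evaluated across each jump, exploiting the identity $\mathrm{sign}(a-c)(f(a)-f(c)) = |f(a)-f(c)|\cdot\mathrm{sign}(a-c)^2$ type manipulations to show that the interface contributions have the right sign up to controllable error terms. Fourth, I would replace $K'\ast\hat\rho^N$ and $K''\ast\hat\rho^N$ throughout by $K'\ast\rho^N$ and $K''\ast\rho^N$ at the cost of $O(1/N)$ errors (Remark \ref{rem:empirical}), and pass to the limit using Theorem \ref{convergence}: the nonlinear terms $f(\rho^N)$ converge in $L^1$ because $f$ is Lipschitz and $\rho^N\to\rho$ strongly, and the convolutions converge uniformly since $K',K''$ are continuous with the supports uniformly bounded by Lemma \ref{lem:support}.

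\textbf{The main obstacle} will be controlling the \emph{bulk} terms coming from $\dot R_i$ and the \emph{discretization error} in matching the discrete flux to the continuous Kruzkov flux. The expression \eqref{eq:explcit_Rdot} for $\dot R_i$ contains finite-difference approximations of $K''\ast\rho$ (through the terms $K'(x_{i+1}-x_j)-K'(x_i-x_j)$) and differences $v(R_{i+1})-v(R_i)$; I expect these to generate both the genuine $K''\ast\rho$ contribution in the limit and a residual that must be shown to vanish as $N\to\infty$. The delicate point is that this residual need not vanish termwise but only in the $\liminf$ sense asserted in \eqref{entropiaN} — which is precisely why the statement is phrased with a $\liminf$ and an inequality rather than as an exact identity. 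Bounding this residual will rely on the uniform $BV$ bound from Proposition \ref{totalvariation} (to control $\sum_i|R_{i+1}-R_i|$) and the uniform Lipschitz estimate on $K'$ over the bounded support, so that the error is $O(1/N)$ times a quantity controlled by $TV[\rho^N]\,e^{CT}$; establishing that this product is $o(1)$, and that the sign contributions never go the wrong way after summation by parts against $\varphi_x$, is where the technical weight of the argument lies.
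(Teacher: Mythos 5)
Your proposal follows essentially the same route as the paper's proof: a cell-by-cell computation driven by the ODEs for $\dot{x}_i$ and $\dot{R}_i$ (i.e.\ \eqref{Odes} and \eqref{eq:explcit_Rdot}), a summation by parts isolating interface terms whose favourable sign comes exactly from the upwind choice $v(R_i)$ versus $v(R_{i-1})$ combined with the monotonicity of $v$ and the sign structure of $K'$, and an $O(1/N)$ bound on the residual terms via the $BV$ estimate of Proposition \ref{totalvariation}, the uniform support bound, and the local Lipschitz bound on $K'$. Two minor caveats: your final limit step (replacing $\hat{\rho}^N$ by $\rho^N$ and sending $\rho^N\to\rho$) is not part of this lemma, whose statement is purely about the discrete quantities with $\hat{\rho}^N$ inside the convolutions — that passage, including the delicate treatment of the discontinuous sign, is the content of Lemma \ref{entropiarho} — and the identity $\mathrm{sign}(a-c)(f(a)-f(c))=|f(a)-f(c)|$ you mention is false for the non-monotone flux $f(z)=zv(z)$, although the sign mechanism you actually rely on (the upwind monotonicity) is the correct one.
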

\begin{proof}
Let $T>0$ such that $\mathrm{supp}\varphi\subset[0,T]\times \R$. The basic idea of the proof is rather simple, although the computations are quite technical: we need to rewrite the left hand side of the inequality so that it is possible to isolate a term with positive sign and then show that the remaining terms give negligible contributions as $N \to \infty$.
By definition of $\rho^N$ and $\hat{\rho}^N$ we obtain
\[ \int_0^T \int_{\R} |\rho^N - c| \varphi_t - \mathrm{sign}(\rho^N -c)[(f(\rho^N) - f(c))K' \ast \varphi_x - f(c)K'' \ast \rho^N \varphi] dx dt = B.T._1 + \sum_{i=0}^{N-1} I_i + \sum_{i=0}^{N-1} II_i, \]
where
\begin{align*}
& I_i := \int_0^T \int_{x_i}^{x_{i+1}} |R_i^N -c| \varphi_t\,dxdt ,\\
& II_i := - \int_0^T \int_{x_i}^{x_{i+1}} \mathrm{sign}(R_i^N - c)(f(R_i^N) - f(c))K' \ast \hat{\rho}^N \varphi_x\,dxdt  \\
&\hspace*{1.2cm}+ \int_0^T \int_{x_i}^{x_{i+1}} f(c) \mathrm{sign}(R_i^N - c) K'' \ast \hat{\rho}^N \varphi\, dxdt, \\
& B.T._1 := \int_0^T \int_{-\infty}^{x_0} c\varphi_t - f(c)[K' \ast \hat{\rho}^N \varphi_x + K'' \ast \hat{\rho}^N \varphi]\,dxdt  \\
&\hspace*{1.2cm}+ \int_0^T \int_{x_N}^{\infty} c\varphi_t - f(c)[K' \ast \hat{\rho}^N \varphi_x + K'' \ast \hat{\rho}^N \varphi]\,dxdt.
\end{align*}
For simplicity of notation we set $S^N_i : = \mathrm{sign}(R_i^N - c)$ and we omit the dependence on $N$ and $t$ wherever it is clear from the context.
Integrating by parts and recalling the definition of $\hat{\rho}^N$ and the expression for $\dot{R}_i$, we can rewrite $I_i$ as
\begin{align*}
I_i=& \int_0^T S_i R_i (\dot{x}_{i+1} - \dot{x}_i) \left(\intmed_{x_{i}}^{x_{i+1}} \varphi(t,x) dx   - \varphi(t,x_{i+1})\right)dt  \\
&+ \int_0^T S_i [R_i (\dot{x}_{i+1} - \dot{x}_i) \varphi(t,x_{i+1})  - (R_i -c) (\dot{x}_{i+1}\varphi(t,x_{i+1}) - \dot{x}_i \varphi(t,x_i))]dt,
\end{align*}
and $II_i$ as
\begin{align*}
II_i =& -\int_0^T S_i\frac{(f(R_i) - f(c))}{N}\sum_{j=0}^N (K'(x_{i+1} - x_j)\varphi(t,x_{i+1}) - K'(x_i - x_j)\varphi(t,x_i))dt \\
&+ \int_0^T S_i \frac{f(R_i)}{N} \sum_{j=0}^N \int_{x_i}^{x_{i+1}} K''(x-x_j)\varphi(t,x)dxdt\,.
\end{align*}
Then the sum $I_i + II_i$ becomes
\[I_i + II_i = A^1_i + A^2_i + Z_i, \]
where we set
\begin{align*}
A^1_i &= \int_0^T S_i R_i (\dot{x}_{i+1} - \dot{x}_i) \left(\intmed_{x_{i}}^{x_{i+1}} \varphi(t,x) dx   - \varphi(t,x_{i+1})\right)dt, \\
A^2_i &=  \int_0^T S_i \frac{f(R_i)}{N} \sum_{j=0}^N \int_{x_i}^{x_{i+1}} K''(x-x_j)\varphi(t,x)dxdt,
\end{align*}
and
\begin{align*}
Z_i = &- \sum_{i=0}^{N-1} \int_0^T S_i\varphi(t,x_{i+1})[R_i \dot{x}_i + \frac{f(R_i)}{N}\sum_{j=0}^N K'(x_{i+1}-x_j)]dt \\
&+ \sum_{i=0}^{N-1} \int_0^T S_i\varphi(t,x_{i+1})[c \dot{x}_{i+1} + \frac{f(c)}{N}\sum_{j=0}^N K'(x_{i+1}-x_j)]dt \\
&+ \sum_{i=0}^{N-1} \int_0^T S_i\varphi(t,x_{i})[R_i \dot{x}_i + \frac{f(R_i)}{N}\sum_{j=0}^N K'(x_{i}-x_j)]dt \\
&- \sum_{i=0}^{N-1} \int_0^T S_i\varphi(t,x_{i})[c \dot{x}_i + \frac{f(c)}{N}\sum_{j=0}^N K'(x_{i}-x_j)]dt.
\end{align*}
By performing a summation by parts, we get
\begin{align*}
\sum_{i=0}^{N-1} Z_i &= B.T._2 + \sum_{i=1}^{N-1} \int_0^T \varphi(t,x_i) S_i\left(R_i \dot{x}_i +\frac{f(R_i)}{N} \sum_{j=0}^N K'(x_i -x_j) \right)dt \\
&\quad - \sum_{i=1}^{N-1} \int_0^T \varphi(t,x_i) S_{i-1}\left(R_{i-1}\dot{x}_{i-1} + \frac{f(R_{i-1})}{N}\sum_{j=0}^N K'(x_i-x_j) \right)dt \\
&\quad +\sum_{i=1}^{N-1} \int_0^T \varphi(t,x_i)(S_{i-1}- S_i)\left(c\dot{x}_i + \frac{f(c)}{N} \sum_{j=0}^N K'(x_i-x_j) \right)dt \\
&= B.T._2 + B.T._3 + \sum_{i=1}^{N-2} (A_i^3 + A_i^4) + \sum_{i=1}^{N-1}B_i.
\end{align*}
where $B.T._2$ and $B.T._3$ regard the external particles. More precisely, $B.T._2= B.T._{21}+ B.T._{22}$, where
\begin{align*}
B.T._{21} =& c \int_0^T \varphi(t,x_N) S_{N-1} \frac{v(c)-v(R_{N-1})}{N} \sum_{j=0}^N K'(X_N - x_j)dt \\
&  -c \int_0^T \varphi(t,x_0) S_{0}\frac{v(c)-v(R_{0})}{N}\sum_{j=0}^N K'(X_0 - x_j)dt, \\
B.T._{22} =& \int_0^T \varphi(t,x_0)S_{0}R_0 \left(\dot{x}_0 + \frac{v(R_0)}{N}\sum_{j=0}^N K'(X_0 - x_j)\right)dt \\
&- \int_0^T \varphi(t,x_N)S_{N-1} R_{N-1} \left(\dot{x}_{N-1} + \frac{v(R_{N-1})}{N}\sum_{j=0}^N K'(X_N - x_j)\right)dt,
\end{align*}
and $B.T._3$ corresponds to
\begin{align*}
B.T._3 =& \int_0^T \varphi(t,x_{N-1}) S_{N-1}\left(R_{N-1}\dot{x}_{N-1} + \frac{f(R_{N-1})}{N} \sum_{j=0}^N K'(x_{N-1}-x_j)\right)dt \\
&- \int_0^T \varphi(t,x_{0})S_0 \left( R_{0}\dot{x}_{0} + \frac{f(R_{0})}{N} \sum_{j=0}^N K'(x_{1}-x_j)\right)dt.
\end{align*}
The terms $A_i^3,\,A_i^4$ and $B_i$ regards, instead, the internal particles and they are defined as follows
\begin{align*}
A_i^3 =& \int_0^T \varphi(t,x_i) S_i \frac{f(R_i)}{N}\sum_{j=0}^N [K'(x_i-x_j)-K'(x_{i+1}-x_j)]dt, \\
A_i^4 =& \int_0^T (\varphi(t,x_i) - \varphi(t,x_{i+1}))S_i R_i \left( \dot{x}_i + \frac{v(R_i)}{N} \sum_{j=0}^N K'(x_{i+1}-x_j) \right)dt,\\
B_i =& \int_0^T \varphi(t,x_i)(S_{i-1}- S_i))\left(c\dot{x}_i + \frac{f(c)}{N} \sum_{j=0}^N K'(x_i-x_j) \right)dt.
\end{align*}
Summarizing, we can rewrite $B.T._1 + \sum_{i=0}^{N-1} (I_i + II_i)$ as
\[ B.T._1 + B.T_{21} + B.T._{22} + B.T._3 + \sum_{i=0}^{N-1} (A^1_i + A^2_i) + \sum_{i=1}^{N-2} (A^3_i + A^4_i) + \sum_{i=1}^{N-1} B_i,  \]
then estimate \eqref{entropiaN} follows if we prove that such sum is non negative when $N \gg 1$, and this can be done by showing that
\begin{equation}\label{partepositiva}
B.T._1 + B.T._{21} + \sum_{i=1}^{N-1}B_i > 0,
\end{equation}
while
\begin{equation}\label{ordine1/N}
\left| B.T._{22} + B.T._3 + \sum_{i=0}^{N-1}(A_i^1 + A_i^2) + \sum_{i=1}^{N-2} (A_i^3 + A_i^4) \right| \leq \frac{C}{N}
\end{equation}
for a positive constant $C= C(\varphi,K,\bar{\rho},v,T)$.
The remaining part of the proof is devoted to show the validity of \eqref{partepositiva} and \eqref{ordine1/N}.
We focus first on \eqref{partepositiva}. Integrating by parts, recalling that $\varphi(0,\cdot)=\varphi(T,\cdot)=0$, $\varphi(t,\cdot) \geq 0$ and the assumption (AK), we immediately obtain
\begin{equation}\label{BT1}
B.T._1 = -\frac{f(c)}{N} \int_0^T \left(\varphi(t,x_N)\sum_{j=0}^N K'(x_N-x_j) + \varphi(t,x_0)\sum_{j=0}^N K'(x_0-x_j)\right) > 0.
\end{equation}
Because of the monotonicity of $v$ (see (Av)), for all times $t$ we know that
\[ S_{0}(t)(v(c) - v(R_{0}(t))) \geq 0,\quad \mbox{ and }\quad S_{N-1}(t)(v(c) - v(R_{N-1}(t))) \geq 0  \]
thus, recalling again (AK), we deduce
\begin{equation}\label{BT21}
B.T._{21} \geq 0.
\end{equation}
Let us now consider the generic term $B_i$. Substituting the expression of $\dot{x}_i$, we get
\[ B_i = \int_0^T\varphi(t,x_i) (S_{i-1} - S_i)\left[\frac{v(c)-v(R_i)}{N} \sum_{j>i} K'(x_i-x_j) + \frac{v(c)-v(R_{i-1})}{N} \sum_{j<i} K'(x_i-x_j) \right]dt.   \]
Now, if $R_i(t),R_{i-1}(t)$ are both strictly bigger than $c$ or strictly smaller than $c$, then $S_{i-1}(t) - S_i(t)=0$. Otherwise, since $v$ is decreasing (and we assume $\mathrm{sign}(0)=0$), we get
\begin{align*}
R_i(t) \geq c \geq R_{i-1}(t) &\quad\Rightarrow\, -2 \leq S_{i-1}(t) - S_i(t)\leq 0, \quad v(c)-v(R_i(t)) \geq 0, \quad v(c)-v(R_{i-1}) \leq 0 \\
R_{i-1}(t) \geq c \geq R_i(t) &\quad\Rightarrow\, 0 \leq S_{i-1}(t) - S_i(t) \leq 2, \quad v(c)-v(R_i(t)) \leq 0, \quad v(c)-v(R_{i-1}) \geq 0
\end{align*}
and, recalling that $K'$ is symmetric and $K'(x)>0$ if $x>0$, for all times holds
\[ (S_{i-1}- S_i)\left[\frac{v(c)-v(R_i)}{N} \sum_{j>i} K'(x_i-x_j) + \frac{v(c)-v(R_{i-1})}{N} \sum_{j<i} K'(x_i-x_j) \right] \geq 0.  \]
In particular, $B_i \geq 0$ and
\begin{equation}\label{Bi}
\sum_{i=1}^{N-1} B_i \geq 0.
\end{equation}
Then estimate \eqref{partepositiva} is a direct consequence of \eqref{BT1}, \eqref{BT21} and \eqref{Bi}. Let us consider now \eqref{ordine1/N}. First of all, observe that Lemma \ref{MaxPrinc} ensures that the support of $\rho^N$ is always contained in the support of $\bar{\rho}$, then since we assume $K'$ locally Lipschitz then there exists a constant $L>0$ such that
\[ L=\sup\left\{ |K''(x)|\,,\,\, x\in[-(\bar{x}_{max}-\bar{x}_{min}),(\bar{x}_{max}-\bar{x}_{min})]\right\}.  \]
Since the argument is quite technical, it is more convenient to split the left hand side of \eqref{ordine1/N} in three parts:
\[ \Gamma_1 = B.T._{22} + B.T._3 + A_0^2 + A_{N-1}^2,\quad \Gamma_2 = \sum_{i=0}^{N-1} A_i^1 + \sum_{i=1}^{N-2} A_i^4,\quad \Gamma_3 = \sum_{i=1}^{N-2} (A_i^2 + A_i^3).  \]
Recalling that $K',\,\varphi$ and $v$ are uniformly bounded and Lipschitz, we get
\begin{align}\label{Gamma1}
\notag|\Gamma_1| \leq\, &4 L\,\|\varphi\|_{L^{\infty}} \|v\|_{L^{\infty}} \int_0^T (R_0(x_1 - x_0) + R_{N-1}(x_{N}-x_{N-1}))dt  \\
&+ 2 L\,\|v\|_{L^{\infty}}Lip[\varphi] \int_0^T R_{N-1}(x_N - x_{N-1})dt \leq \frac{C(\varphi,v,L,T)}{N}
\end{align}
Then, inserting the expression of $\dot{x}_i$, we can rearrange $\Gamma_2$ in such a way that
\begin{align*}
|\Gamma_2| &\leq 3\sum_{i=0}^{N-1} \int_0^T R_i \left|\intmed_{x_i}^{x_{i+1}}\varphi(t,x) - \varphi(t,x_{i+1})\right|\frac{|v(R_{i+1})-v(R_i)|}{N}\sum_{j=0}^N |K'(x_{i+1}-x_j)| dt \\
&\quad + \sum_{i=0}^{N-1} \int_0^T R_i \left|\intmed_{x_i}^{x_{i+1}}\varphi(t,x) - \varphi(t,x_{i+1})\right| \frac{v(R_i)}{N} \sum_{j=0}^N |K'(x_i-x_j)- K'(x_{i+1}-x_j)|dt \\
&\quad + \sum_{i=1}^{N-2} \int_0^T R_i |\varphi(t,x_i) - \varphi(t,x_{i+1})|\frac{|v(R_{i-1})-v(R_i)|}{N}\sum_{j=0}^N |K'(x_{i}-x_j)|dt \\
&\quad + \sum_{i=1}^{N-2} \int_0^T R_i |\varphi(t,x_i) - \varphi(t,x_{i+1})|\frac{v(R_i)}{N} \sum_{j=0}^N |K'(x_i-x_j)- K'(x_{i+1}-x_j)|dt
\end{align*}
and using the Lipschitz and the uniform regularity of $K',\,\varphi,\,v$, estimate \eqref{TV} and the uniform bound on the support of $\rho^N$, it is easy to see that
\begin{align}\label{Gamma2}
\notag|\Gamma_2| \leq& \,4L\, Lip[\varphi]Lip[v] TV[\bar{\rho}] \int_0^T e^{Ct} \sum_{i=0}^{N-1} R_i (x_{i+1}-x_i)dt \\
& + 2 L\,\|v\|_{L^{\infty}} Lip[\varphi] \int_0^T \sum_{i=0}^{N-1} R_i (x_{i+1}-x_i)^2 dt \leq \frac{C(\varphi,v,K,\bar{\rho},T)}{N}.
\end{align}
It remains to show that also $\Gamma_3$ vanishes as $N \to \infty$. In this case, the uniform bound on $K''$ implies
\begin{align}\label{Gamma3}
\notag |\Gamma_3| &\leq \sum_{i=1}^{N-2} \int_0^T \frac{|f(R_i)|}{N} \int_{x_i}^{x_{i+1}}|\varphi(t,x)-\varphi(t,x_i)| \sum_{j=0}^N |K''(x-x_j)|dxdt \\
&\leq L\,\|v\|_{L^{\infty}} Lip[\varphi] \int_0^T R_i \int_{x_i}^{x_{i+1}} (x-x_i)dxdt \leq \frac{C(\varphi,v,K,\bar{\rho},T)}{N}.
\end{align}
Finally, by putting together \eqref{Gamma1},\eqref{Gamma2} and \eqref{Gamma3}, we obtain \eqref{ordine1/N} and, recalling also \eqref{partepositiva}, \eqref{entropiaN}.
\end{proof}

We are now in position to prove that the large particle limit $\rho$ that we obtained in the previous section is an entropy solution for the PDE.

\begin{lemma}\label{entropiarho}
Let $\rho$ be the limit of $\rho^N$ up to a subsequence. For every non negative $\varphi \in C^{\infty}_c ([0,+\infty)\times \R)$ and $c\geq 0$, one has
\begin{equation}\label{entropia}
0 \leq \int_\R |\bar\rho - c|\varphi(0,x)dx + \int_0^{+\infty} \int_{\R} |\rho - c| \varphi_t - \mathrm{sign}(\rho -c)[(f(\rho) - f(c))K' \ast \rho\, \varphi_x - f(c)K'' \ast \rho\, \varphi] dx dt.
\end{equation}
\end{lemma}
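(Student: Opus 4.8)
The plan is to start from the discrete entropy inequality of Lemma~\ref{entropiarhoN}, first replace the empirical convolutions by convolutions against $\rho^N$, and then pass to the limit $N\to\infty$ for test functions vanishing near $t=0$ (so that, for the moment, no initial term appears). By Remark~\ref{rem:empirical}, whose estimate only uses local Lipschitzianity of the kernel and hence applies both to $K''$ (even) and to $K'$ (odd), we may substitute $K'\ast\hat{\rho}^N$ and $K''\ast\hat{\rho}^N$ by $K'\ast\rho^N$ and $K''\ast\rho^N$ up to an $O(1/N)$ error in $L^1$; this is harmless since $\mathrm{sign}$, $f$ and $\varphi_x,\varphi$ are uniformly bounded on the uniformly bounded support of $\rho^N$. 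Writing $\Phi^N(c)$ for the resulting functional, Lemma~\ref{entropiarhoN} gives $\liminf_N\Phi^N(c)\ge 0$. The entropy density $|\rho^N-c|\varphi_t$ and the flux contribution $\mathrm{sign}(\rho^N-c)(f(\rho^N)-f(c))K'\ast\rho^N\varphi_x$ are \emph{continuous} functions of $\rho^N$ — the discontinuity of the sign in the latter being cancelled by the factor $f(\rho^N)-f(c)$, which vanishes at $\rho^N=c$ — so, using $\rho^N\to\rho$ a.e. and in $L^1$ (Theorem~\ref{convergence}), the bound $\rho^N\le M$, and the convergence $K'\ast\rho^N\to K'\ast\rho$ in $L^1_tL^\infty_x$, these terms pass to the limit by dominated convergence.

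The only term resisting this direct passage is the source $\int\!\!\int \mathrm{sign}(\rho^N-c)f(c)K''\ast\rho^N\varphi$, where the bare sign is discontinuous precisely on $\{\rho=c\}$; \emph{this is the main obstacle}. Here I would introduce a smooth nondecreasing regularization $\mathrm{sign}_\delta$ with $|\mathrm{sign}_\delta|\le 1$ and $\mathrm{sign}_\delta\to\mathrm{sign}$ pointwise, and split $\mathrm{sign}=\mathrm{sign}_\delta+(\mathrm{sign}-\mathrm{sign}_\delta)$. The $\mathrm{sign}_\delta$ part is continuous in $\rho^N$, hence converges as $N\to\infty$ to $\int\!\!\int\mathrm{sign}_\delta(\rho-c)f(c)K''\ast\rho\varphi$, and letting $\delta\to 0$ recovers the exact source term by dominated convergence. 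The remainder is supported on $\{|\rho^N-c|<\delta\}$ and is bounded, so $\limsup_N$ of its absolute value is controlled by $C\,\mathrm{meas}\big(\{|\rho-c|\le\delta\}\cap\mathrm{supp}\,\varphi\big)$, which tends to $C\,\mathrm{meas}\{\rho=c\}$ as $\delta\to 0$. Combining the $\liminf$ inequality with these limits yields, for the limiting functional $\Phi(c)$ (with convolutions against $\rho$), the bound $\Phi(c)\ge -C\,\mathrm{meas}\{\rho=c\}$ for every $c\ge 0$.

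To conclude for \emph{all} $c$ I would use that $\rho$ is bounded with space-time support of finite measure, so $c\mapsto\mathrm{meas}\{\rho=c\}$ is integrable and therefore strictly positive for at most countably many $c$. For every \emph{regular} value $c$ (null level set) the previous bound already gives $\Phi(c)\ge 0$. For the remaining, at most countably many, exceptional values $c_0$ I would observe that the only possible discontinuity of $c\mapsto\Phi(c)$ is a jump of the source term across $c_0$, and that, with the convention $\mathrm{sign}(0)=0$, one has $\Phi(c_0)=\tfrac12\big(\Phi(c_0^-)+\Phi(c_0^+)\big)$; since regular values are dense, both one-sided limits are $\ge 0$, whence $\Phi(c_0)\ge 0$. (Alternatively, the inequality for regular $c$ alone already suffices for the Kruzkov uniqueness argument proved later.)

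It remains to produce the initial term and to admit test functions not vanishing at $t=0$, since Lemma~\ref{entropiarhoN} is stated for $\varphi\in C^\infty_c((0,+\infty)\times\R)$. Given $\varphi\in C^\infty_c([0,+\infty)\times\R)$ with $\varphi\ge 0$, I would apply the interior inequality just obtained to $\theta_\eps(t)\varphi(t,x)$, where $\theta_\eps$ is a smooth temporal cutoff vanishing near $t=0$ and equal to $1$ for $t\ge\eps$. Expanding $(\theta_\eps\varphi)_t=\theta_\eps'\varphi+\theta_\eps\varphi_t$ and letting $\eps\to 0$, the nonnegative factor $\theta_\eps'$ acts as an approximate identity at $t=0^+$ and produces the term $\int_\R |\rho(0^+,x)-c|\,\varphi(0,x)\,dx$. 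The identification $\rho(0^+,\cdot)=\bar\rho$ in $L^1$ follows from the uniform-in-$N$ Wasserstein equicontinuity of Proposition~\ref{continuitytime}, which makes $t\mapsto\rho(t,\cdot)$ a $d_{W^1}$-continuous curve with $\rho(0,\cdot)=\lim_N\rho^N(0,\cdot)=\bar\rho$ (by construction~\eqref{eq:dscr_IC}), together with the uniform $BV$ and $L^\infty$ bounds of Proposition~\ref{totalvariation}, which upgrade the $d_{W^1}$-continuity at $t=0$ to $L^1$-continuity. This gives exactly the inequality in the statement.
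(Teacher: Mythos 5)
Your proposal is correct, and its overall skeleton (pass to the limit in the discrete inequality of Lemma~\ref{entropiarhoN}, use Remark~\ref{rem:empirical} for the nonlocal terms, and exploit the continuity of $h(\mu)=\mathrm{sign}(\mu-c)(f(\mu)-f(c))$ for the flux term) matches the paper's; the genuine divergence is in the treatment of the sign-discontinuous source term $\int\!\!\int f(c)\,\mathrm{sign}(\rho^N-c)\,K''\ast\rho\,\varphi$. The paper never estimates level sets: it introduces two \emph{one-sided} smooth approximations $\eta_\delta^{\pm}$ of the sign (with $\eta_\delta^{+}\leq \mathrm{sign}\leq \eta_\delta^{-}$) and splits the kernel as $K''=(K''-L)+L$, where $L$ bounds $|K''|$ on the relevant compact set, so that $(K''-L)\ast\rho\leq 0$ and $L\ast\rho\geq 0$; the resulting one-sided bounds pass to the limit first in $N$, then in $\delta$, and sandwich the source term, giving the entropy inequality for \emph{every} $c\geq 0$ directly. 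You instead use a single symmetric regularization $\mathrm{sign}_\delta$, pay an error controlled (via reverse Fatou on the bounded support) by $C\,\mathrm{meas}\big(\{|\rho-c|\leq\delta\}\cap\mathrm{supp}\,\varphi\big)$, arrive at $\Phi(c)\geq -C\,\mathrm{meas}\{\rho=c\}$, and then remove the at most countably many exceptional levels through the averaging identity $\Phi(c_0)=\tfrac12\big(\Phi(c_0^-)+\Phi(c_0^+)\big)$ together with the density of regular values; this is a valid and more elementary argument (it needs no structural sign splitting of $K''$), at the price of an extra continuity analysis in the parameter $c$. Finally, your handling of the initial term is in fact more careful than the paper's: Lemma~\ref{entropiarhoN} requires $\varphi(0,\cdot)=0$, yet the paper's proof of the present lemma simply lists the limit $\int_\R|\rho^N(0,x)-c|\varphi(0,x)\,dx\to\int_\R|\bar\rho-c|\varphi(0,x)\,dx$ as if an initial term were already present at the discrete level; your temporal cutoff $\theta_\eps$ combined with the identification of the $L^1$ trace $\rho(0^+)=\bar\rho$ (uniform Wasserstein equicontinuity from Proposition~\ref{continuitytime}, interpolated against the uniform $BV$ and $L^\infty$ bounds from Proposition~\ref{totalvariation}) supplies exactly the step the paper leaves implicit.
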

\begin{proof}
Let $T>0$ be such that $\mathrm{supp}(\varphi)\subset [0,T)$. Roughly speaking, the statement holds provided we can show that it is possible to pass to the limit as $N \to \infty$ in the inequality~\eqref{entropiaN}. More precisely,
we need to prove the following
\begin{align*}
&  \lim_{N \to \infty} \int_\R |\rho^N(0,x) - c|\varphi(0,x)dx = \int_\R |\bar\rho - c|\varphi(0,x)dx\\
& \lim_{N \to \infty} \int_0^T \int_{\R} |\rho^N-c|\,\varphi_t\, dx dt = \int_0^T \int_{\R} |\rho-c|\,\varphi_t \, dx dt \\
& \lim_{N \to \infty} \int_0^T \int_{\R} \mathrm{sign}(\rho^N-c)(f(\rho^N)-f(c))K'\ast \hat{\rho}^N\,\varphi_x \, dx dt\\
 & \qquad = \int_0^T \int_{\R} \mathrm{sign}(\rho-c)(f(\rho)-f(c))K'\ast \rho \,\varphi_x \, dx dt\\
& \lim_{N \to \infty} \int_0^T \int_{\R} f(c) \mathrm{sign}(\rho^N - c)K'' \ast \hat{\rho}^N\,\varphi \, dx dt = \int_0^T \int_{\R} f(c)   \mathrm{sign}(\rho - c)K'' \ast \rho\,\varphi\, dx dt
\end{align*}
The first two limits are immediate in view of the strong $L^1$-convergence of $\rho^N(0,x)$ to $\bar\rho$ and to the convergence of $\rho^N$ to $\rho$ almost everywhere in $L^1([0,T] \times \R)$ respectively.
Notice now that the continuity of $f$ ensures the continuity of the function $h(\mu):= \mathrm{sign}(\mu - c)(f(\mu)-f(c))$. We have
\begin{align*}
\int_0^T \int_{\R}& [\mathrm{sign}(\rho^N-c)(f(\rho^N)-f(c))K'\ast \hat{\rho}^N - \mathrm{sign}(\rho-c)(f(\rho)-f(c))K'\ast \rho ]\,\varphi_x\,dx dt\\
&= \int_0^T \int_{\R}(h(\rho)-h(\rho^N))K'\ast \rho\, \varphi_x\,dx dt + \int_0^T \int_{\R}  h(\rho^N) K'\ast(\rho -\hat{\rho}^N)\,\varphi_x\,dx dt,\\
\end{align*}
then the regularity of $h$ and $K'$ required in the assumptions (Av) and (AK), the convergence of $\rho^N$ to $\rho$ almost everywhere in $[0,T] \times \R$ and the strong $L^1$-convergence of $K' \ast \hat{\rho}^N$ to $K' \ast \rho$ established in Remark \ref{rem:empirical} imply that
\begin{equation}\label{bla2}
\int_0^T \int_{\R}|[(h(\rho)-h(\rho^N))K'\ast \rho +  h(\rho^N) K'\ast(\rho -\hat{\rho}^N)]\,\varphi_x|\,dx dt \to 0
\end{equation}
as $N$ tends to $+\infty$. Concerning the fourth limit, instead, we can see that
\begin{align*}
\int_0^T \int_{\R}& f(c)[\mathrm{sign}(\rho^N-c)K''\ast \hat{\rho}^N - \mathrm{sign}(\rho-c)K''\ast\rho]\,\varphi\,dx dt \\
&= \int_0^T \int_{\R} f(c)\mathrm{sign}(\rho^N-c)K''\ast (\hat{\rho}^N - \rho)\,\varphi\,dx dt \\
&\quad + \int_0^T \int_{\R} f(c)(\mathrm{sign}(\rho^N-c) -\mathrm{sign}(\rho-c))K''\ast \rho\,\varphi\,dx dt.
\end{align*}
The first of the two terms can be handled as above. By using Remark \ref{rem:empirical} and Lemma \ref{lem:empirical1}, we get 
\begin{equation}\label{bla3}
\int_0^T \int_{\R} |f(c)\mathrm{sign}(\rho^N-c)K''\ast (\hat{\rho}^N - \rho)\,\varphi\,dx| dt \leq \frac{C(K'',\|f\|_{\infty},\varphi)}{N}\,.
\end{equation}
On the other hand, passing to the limit in the terms including the difference $\mathrm{sign}(\rho^N-c)- \mathrm{sign}(\rho-c)$ is less straightforward because of the discontinuity of the sign function.
Let us then focus on the proof of
\[ \lim_{N \to \infty} \int_0^T \int_{\R} f(c)(\mathrm{sign}(\rho^N-c)-\mathrm{sign}(\rho-c))K''\ast \rho\,\varphi\,dxdt = 0\,. \]
In order to get rid of the discontinuity, we need to introduce two smooth approximations of the \emph{sign} function, we call them $\eta_{\delta}^{\pm}$, so that
\[ \mathrm{sign}(z) - \eta_{\delta}^+(z) \geq 0 \quad \mbox{ and } \quad \mathrm{sign}(z)-\eta_{\delta}^-(z) \leq 0.  \]
Let us recall that the regularity of $K$ ensures the existence of a constant $L>0$ such that $|K''(z)| \leq L$ for every $z \in [-2\mathrm{meas}(\mathrm{supp}(\rho^N)), 2\mathrm{meas}(\mathrm{supp}(\rho^N))]$ and every $N$.
Then we can estimate
\begin{align*}
\int_0^T\int_{\R}f(c)\mathrm{sign}&(\rho^N-c) K''\ast \rho \varphi \\
&= \int_0^T\int_{\R}f(c)\mathrm{sign}(\rho^N-c)(K''-L)\ast\rho\,\varphi + \int_0^T\int_{\R}f(c)\mathrm{sign}(\rho^N-c)L\ast \rho\,\varphi \\
&\leq \int_0^T\int_{\R}f(c)\eta_{\delta}^+(\rho^N-c)(K''-L)\ast \rho\,\varphi + \int_0^T\int_{\R}f(c)\eta_{\delta}^-(\rho^N-c)L\ast \rho\,\varphi\,.
\end{align*}
where the inequality holds because
\begin{align*}
&(\mathrm{sign}(\rho^N-c)-\eta_{\delta}^+(\rho^N-c))(K''-L)\ast \rho \leq 0 \\
&(\mathrm{sign}(\rho^N-c)-\eta_{\delta}^-(\rho^N-c)) L\ast \rho \leq 0.
\end{align*}
Now, observe that
\begin{align*}
\lim_{N\to \infty}& f(c) \int_0^T \int_{\R} (\eta_{\delta}^+(\rho^N -c) - \eta_{\delta}^+(\rho -c)) (K''-L)\ast \rho\,\varphi\\
&\leq \lim_{N\to \infty} f(c)\int_0^T\int_{\R} |\eta_{\delta}^+(\rho^N -c) -\eta_{\delta}^+(\rho -c)| |(K''-L)\ast\rho\,\varphi| \\
&\leq \lim_{N\to \infty} f(c) 2L \| \varphi\|_{\infty} Lip[\eta_{\delta}^+] \int_0^T\int_{\R} |\rho^N-\rho| \\
&\leq C(L, \varphi, \eta_{\delta}^+) \lim_{N\to \infty} \| \rho^N - \rho\|_{L^1([0,T]\times \R)}= 0
\end{align*}
and in a similar way we get also
\[ \lim_{N \to \infty} \int_0^T\int_{\R} f(c)(\eta_{\delta}^-(\rho^N-c) - \eta_{\delta}^-(\rho-c))L\ast \rho\,\varphi = 0\,, \]
thus implying that
\begin{equation*}
\limsup_{N\to\infty} \int_0^T\int_{\R}f(c)\mathrm{sign}(\rho^N-c) K''\ast \rho\,\varphi \leq \int_0^T\int_{\R} f(c)[\eta_{\delta}^+ (\rho-c)(K''-L)\ast\rho + \eta_{\delta}^-(\rho-c)L\ast \rho]\,\varphi
\end{equation*}
Once here, the dominated convergence Theorem ensures that we can pass to the limit in $\delta$ to get
\[\limsup_{N \to \infty} f(c)\int_0^T\int_{\R} \mathrm{sign}(\rho^N-c) K'' \ast \rho\,\varphi \leq \int_0^T\int_{\R} \mathrm{sign}(\rho-c) K'' \ast \rho\,\varphi.\]
A symmetric argument provides the inverse inequality with $\liminf$ replacing $\limsup$, hence we obtain
\begin{equation}\label{limiteinN}
\lim_{N \to \infty} f(c)\int_0^T\int_{\R} (\mathrm{sign}(\rho^N-c)-\mathrm{sign}(\rho-c)) K'' \ast \rho\,\varphi = 0.
\end{equation}
The above argument, together with~\eqref{bla2}-\eqref{limiteinN}, implies estimate~\eqref{entropia}, and the proof is complete.
\end{proof}

We now tackle another crucial task for our result, namely the \emph{uniqueness of the entropy solution} for a fixed initial datum. To perform this task we rely on a stability result due to Karlsen and Risebro \cite{KarlsenRiebro}, that we report here for sake of completeness in an adapted version.

\begin{theorem}\label{KarlsenRiebro}
Let $f,P,Q$ be such that
\[ f \quad \mbox{ is locally Lipschitz, }\qquad P,Q \in W^{1,1}(\R) \cap \mathcal{C}(\R), \qquad P_x,Q_x \in L^{\infty}(\R), \]
and let $p,q \in L^\infty([0,T]; BV(\R))$ be respectively entropy solutions to
\[ \left\lbrace \begin{array}{ll}
&p_t = (f(p)P(x))_x \quad p(0,x)=p_0(x),\\
&q_t = (f(q)Q(x))_x \quad q(0,x)=q_0(x),
\end{array}\right.\]
where the initial data $(p_0,q_0)$ are in $L^1(\R) \cap L^{\infty}(\R) \cap BV(\R)$.
Then for almost every $t \in (0,T)$ one has
\begin{equation}\label{contrattivitasol}
\| p(t) - q(t)  \|_{L^1(\R)} \leq \| p_0 - q_0  \|_{L^1(\R)} + t(C_1 \| P-Q\|_{L^{\infty}(\R)} + C_2\| P-Q\|_{BV(\R)})
\end{equation}
where $C_1 = Lip[f] \min\{ \|P\|_{BV(\R)}, \|Q \|_{BV(\R)}\}$ and $C_2 = \| f\|_{L^{\infty}}$.
\end{theorem}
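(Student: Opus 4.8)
The plan is to prove the contraction estimate \eqref{contrattivitasol} by Kruzkov's doubling-of-variables technique, adapted to the spatially inhomogeneous fluxes $A(x,u)=f(u)P(x)$ and $B(x,u)=f(u)Q(x)$; this is exactly the route of Karlsen and Risebro. Writing $p=p(t,x)$ and $q=q(s,y)$, I would start from the two Kruzkov entropy inequalities (in the form of Definition~\ref{solentropicadef}, with $P$ and $Q$ respectively in place of $K'\ast\rho$), choosing the Kruzkov constant $c=q(s,y)$ in the inequality for $p$ and $c=p(t,x)$ in the inequality for $q$. As test function I would take
\[
\phi(t,x,s,y)=\psi\Big(\tfrac{t+s}{2},\tfrac{x+y}{2}\Big)\,\omega_{\epsilon_0}(t-s)\,\omega_{\epsilon}(x-y),
\]
with $\omega_{\epsilon_0},\omega_{\epsilon}$ standard mollifiers and $\psi\geq 0$ a smooth cutoff in the averaged variables. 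Adding the two inequalities exploits the usual diagonal cancellations $\phi_t+\phi_s=\psi_t\,\omega_{\epsilon_0}\omega_{\epsilon}$ and $\phi_x+\phi_y=\psi_x\,\omega_{\epsilon_0}\omega_{\epsilon}$, where here $\psi_t,\psi_x$ denote the derivatives of $\psi$ in its averaged arguments.

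Next I would let $\epsilon_0\to0$ and $\epsilon\to0$, collapsing everything onto the diagonal $t=s$, $x=y$ to obtain a Kato-type inequality for $|p-q|$. Here the hypothesis $p,q\in L^\infty([0,T];BV(\R))$ is essential: it provides the strong traces and the finite total variation needed to pass to the limit in the singular terms. If $P\equiv Q$ one recovers the classical $L^1$-contraction, and the inhomogeneity survives in exactly two places. The combined source term takes the form $\mathrm{sign}(p-q)[f(q)P'(x)-f(p)Q'(x)]$, which I would split as $f(q)(P'-Q')+(f(q)-f(p))Q'$. The combined flux term $-\mathrm{sign}(p-q)(f(p)-f(q))[P(x)\phi_x+Q(y)\phi_y]$ produces, through the singular factor $\omega_\epsilon'(x-y)$, both diagonal contributions that reconstruct the distributional derivatives $P',Q'$ and recombine with the source, and a genuine remainder proportional to the pointwise difference $P-Q$.

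To extract the two constants I would estimate the remainder families separately. Every term carrying $P'-Q'$ is controlled by $\|f\|_{L^\infty}\int_0^t\|P_x-Q_x\|_{L^1}\,d\tau=\|f\|_{L^\infty}\,t\,\|P-Q\|_{BV}$, yielding $C_2=\|f\|_{L^\infty}$. Every term carrying the pointwise difference $P-Q$ I would treat by integrating the singular factor by parts and letting the derivative fall on $f(q(s,y))$ (or, symmetrically, on $f(p(t,x))$): since $\partial_y f(q)=f'(q)\,q_y$ and $q_y$ is a measure of total mass at most $TV[q]\leq\|q\|_{BV}$, these are bounded by $\mathrm{Lip}[f]\,\|P-Q\|_{L^\infty}\,\min\{\|P\|_{BV},\|Q\|_{BV}\}\,t$, which is exactly $C_1$ (the minimum coming from the freedom in which of $f(p)$, $f(q)$ absorbs the derivative). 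Finally, choosing $\psi$ to approximate the indicator of $[0,t]\times\R$ in the time variable and passing to the limit turns the Kato inequality into the integrated bound \eqref{contrattivitasol}.

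The hard part is not the conceptual structure but the rigorous passage to the limit in the doubling, and in particular the control of the singular flux contribution $(P(x)-Q(y))\,\psi\,\omega_\epsilon'(x-y)$: one must cleanly separate the portion that reconstructs $P'$ and $Q'$ (and so reproduces the expected source in the limit) from the residual proportional to $P-Q$, and then bound the residual using the $BV$ regularity of the solutions so as to produce \emph{precisely} the factor $\min\{\|P\|_{BV},\|Q\|_{BV}\}$. This bookkeeping, together with the justification that the diagonal traces exist and the limits are legitimate, is where the real work lies, and it is the reason the $BV$ hypothesis on $p$ and $q$ cannot be dispensed with.
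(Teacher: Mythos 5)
The first thing to observe is that the paper contains no proof of this statement at all: Theorem~\ref{KarlsenRiebro} is imported, in the authors' words ``in an adapted version'', from the reference \cite{KarlsenRiebro} and is used as a black box in the uniqueness part of the proof of Theorem~\ref{main}. So the only meaningful comparison is with that source, and your outline does follow its method: Kruzkov doubling of variables for the inhomogeneous fluxes $f(u)P(x)$ and $f(u)Q(x)$, collapse onto the diagonal, and separate bookkeeping of the terms carrying $P'-Q'$ (which correctly yields $C_2=\|f\|_{L^\infty}$) and of the residual carrying the pointwise difference $P-Q$. Your splitting of the source term and the cancellation between $(f(q)-f(p))Q'$ and the portion of the singular flux term that reconstructs $Q'$ are structurally sound.

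There is, however, a concrete gap in your derivation of $C_1$, beyond the fact that the decisive singular-term estimates are deferred rather than carried out (an outline that postpones what you yourself call ``where the real work lies'' is a plan, not a proof). When you integrate the singular factor by parts and let the derivative fall on $f(q(s,y))$, the quantity your argument produces is $\mathrm{Lip}[f]\,\|P-Q\|_{L^\infty}\,\sup_s TV[q(s,\cdot)]$ --- the total variation of the \emph{solution} $q$ --- and symmetrically $\sup_t TV[p(t,\cdot)]$ if the derivative falls on $f(p)$. You then assert that this ``is exactly $C_1$'', i.e.\ $\mathrm{Lip}[f]\min\{\|P\|_{BV},\|Q\|_{BV}\}$, which involves the BV norms of the \emph{coefficients}. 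These are unrelated quantities, and neither controls the other in general: in the paper's application, $\|P\|_{BV}=\|K'\ast\rho\|_{BV}$ is bounded using only the mass of $\rho$, whereas $TV[\rho(t,\cdot)]$ requires the Gronwall-type estimate of Proposition~\ref{totalvariation}. What your argument proves, if completed, is the estimate with $C_1'=\mathrm{Lip}[f]\min\{\sup_t TV[p(t,\cdot)],\,\sup_t TV[q(t,\cdot)]\}$ in place of $C_1$; this solution-BV form is essentially what Karlsen and Risebro establish (and it is precisely why the hypothesis $p,q\in L^\infty([0,T];BV(\R))$ is imposed), and it would serve the paper's uniqueness argument equally well, since there one only needs some finite constant depending on $K$ and $\bar\rho$. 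But as a proof of the statement as written, the silent identification of the two constants is a non sequitur: you should either prove the solution-BV version and flag the discrepancy with the printed constant, or supply a genuinely different estimate of the residual term that produces the coefficient BV norms --- which the doubling computation, as you have set it up, does not do.
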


We are now ready to prove our main theorem.

\proofof{Theorem~\ref{main}}
The results in Theorem \ref{convergence} and Lemma \ref{entropiarho} imply that there exist a subsequence of $\rho^N$ converging almost everywhere on $[0,+\infty)\times \R$ and in $L^1_{loc}$ to an entropy solution $\rho$ to \eqref{CauchyProblem} in the sense of Definition \ref{solentropicadef}. Therefore, the proof of Theorem~\ref{main} is concluded once we show that $\rho$ is the unique entropy solution. We argue by contradiction. Assume that there exist two different functions $\rho$ and $\varrho$ satisfying Definition~\ref{solentropicadef} with $\rho(0,\cdot) = \varrho(0,\cdot) = \bar{\rho}$, then we can define two vector fields $P(x) = K' \ast \rho (x)$ and $Q(x)= K'\ast \varrho(x)$.
In order to apply Theorem~\ref{KarlsenRiebro} to $P$ and $Q$, let us check that all assumptions therein are satisfied.
First of all, $P$ and $Q$ are locally Lipschitz in $\R$ thanks to the assumption (AK), thus $P_x, Q_x \in L^{\infty}_{loc}(\R)$. Then, we observe that
\begin{align*}
|P(t,x) - Q(t,x)| &= \left| \int_{\R} K'(x-y)\rho(t,y)dy - \int_{\R} K'(x-y)\varrho(t,y)dy \right| \\
&\leq \int_{\R} |K'(x-y)(\rho(t,y) - \varrho(t,y))|dy \leq L_{\bar{\rho}} \| \rho - \varrho\|_{L^{\infty}([0,T];L^1(\R))},
\end{align*}
and
\begin{align*}
\int_{\R} |P_x(s,x) - Q_x(s,x)|dx &= \int_{\R} |K'' \ast \rho(t,x) - K'' \ast \varrho(t,x)| dx \\
&= \int_{\R} |K''\ast(\rho - \varrho)(t,x)|dx  \leq L_{\bar{\rho}} \|\rho - \varrho\|_{L^{\infty}([0,T];L^1(\R))},
\end{align*}
where $L_{\bar\rho}=\max\{\|K'\|_{L^\infty(I_{\bar\rho})},\|K''\|_{L^1(I_{\bar\rho})}\}$, and $I_{\bar\rho}=[-2\mathrm{meas}(\mathrm{supp}(\bar{\rho})), 2\mathrm{meas}(\mathrm{supp}(\bar{\rho}))]$.
As a consequence
\begin{align*}
& \| P-Q\|_{L^{\infty}([0,T] \times \R)} \leq L_{\bar{\rho}} \| \rho - \varrho\|_{L^{\infty}([0,T];L^1(\R))}\\
 & \| P-Q\|_{L^{\infty}([0,T] ; BV(\R))} \leq L_{\bar{\rho}} \| \rho - \varrho\|_{L^{\infty}(0,T;L^1(\R))}.
\end{align*}
By applying Theorem~\ref{KarlsenRiebro} to $\rho,\varrho, P$ and $Q$ we obtain
\begin{equation}\label{assurdo}
\| \rho(t) - \varrho(t)  \|_{L^1(\R)} \leq C(K,\bar{\rho}) t \| \rho(t) - \varrho(t) \|_{L^1(\R)}.
\end{equation}
Assume that there exists an open interval $(t_1,t_2)\subset [0,T]$ such that $\rho(t,\cdot)$ and $\varrho(t,\cdot)$ differ in $L^1(\R)$ on $t\in (t_1,t_2)$. Then, due to the fact that \eqref{eq:intro_PDE} is invariant with respect to time-translations, the inequality  ~\eqref{assurdo} implies
\begin{equation}\label{assurdovero}
\| \rho(t,\cdot) - \varrho(t,\cdot)  \|_{L^1(\R)} \leq C(K,\bar{\rho}) (t-t_1) \| \rho(t,\cdot) - \varrho(t,\cdot) \|_{L^1(\R)}\quad \forall\,t \in (t_1,t_2).
\end{equation}
Clearly can always consider $t\in (t_1,t_2)$ small enough such that $C(K,\bar{\rho}) (t-t_1)< 1$, but this is in contradiction with~\eqref{assurdovero}. In conclusion, $\rho(t,\cdot)\equiv \varrho(t,\cdot)$ on $[0,T]$ and the proof is complete.
\end{proof}

\section{Non-uniqueness of weak solutions and steady states}\label{sec:discussion}

The use of the notion of entropy solution in the present context is not merely motivated by the technical need of identifying a notion of solution (stronger than weak solutions) allowing to prove uniqueness. Similarly to what happens for scalar conservation laws, we prove that there are explicit examples of initial data in $BV$ for which there exists two weak solutions to the Cauchy problem \eqref{CauchyProblem}. 

For simplicity, we use
\[v(\rho)=(1-\rho)_{+}.\]
Consider the initial condition
\[\bar\rho(x)=\mathbf{1}_{[-1,-1/2]}+\mathbf{1}_{[1/2,1]}.\]
Clearly, the stationary function
\[\rho_s(t,x)=\mathbf{1}_{[-1,-1/2]}+\mathbf{1}_{[1/2,1]}\]
is a weak solution to \eqref{CauchyProblem} with initial condition $\bar\rho$. To see this, let $\varphi\in C^1_c([0,+\infty)\times \R)$. We have
\begin{align*}
 & \int_0^{+\infty}\int_\R\left[\rho_s\varphi_t + \rho_s v(\rho_s)K'\ast \rho\varphi_x\right] dx dt + \int_\R\bar\rho\varphi(0,x) dx \\
 & \ = \int_0^{+\infty}\frac{d}{dt}\left(\int_{[-1,-1/2]\cup [1/2,1]}\varphi dx\right)dt +\int_{[-1,-1/2]\cup [1/2,1]}\varphi(0,x) dx = 0.
\end{align*}

We now prove that $\rho_s$ is not an entropy solution, in that it does not satisfy the entropy condition in Definition \ref{solentropicadef}. Let $\psi\in C^\infty_c(\R)$ be a standard non-negative mollifier supported on $[-1/4,1/4]$. Let $T>0$ and consider the test function $\varphi(t,x)=\phi(x)\xi(t)$ with
\[\phi(x)=
\begin{cases}
\psi(x+1/2) & \hbox{if $-3/4\leq x\leq -1/4$} \\
\psi(x-1/2) & \hbox{if $1/4\leq x\leq 3/4$} \\
0 & \hbox{otherwise},
\end{cases}
\]
and $\xi\in C^\infty([0,+\infty))$ with $\xi(t)=1$ for $t\leq T$, $\xi(t)=0$ for $t\geq T+1$ and $\xi$ non-increasing. Let us set $c=1/2$, $I=[1/4,3/4]$, and compute
\begin{align*}
  & \int_\R |\rho_s -c|\phi dx +\int_0^{+\infty}\int_\R\left[|\rho_s-c|\phi(x)\xi'(t) -\mathrm{sign}(\rho_s-c)(f(\rho)-f(c))K'\ast\rho_s\phi'(x)\xi(t) \right.\\
  & \qquad \left.-f(c)K''\ast \rho_s \phi(x)\xi(t)\right]\,dxdt\\
  & \leq 2\int_I \varphi dx + \frac{1}{4}\int_0^{T+1}\xi(t)dt\left[\int_{(-I)\cap(-\infty,1/2]}K'\ast\rho_s \varphi_x dx - \int_{(-I)\cap[1/2,+\infty)}K'\ast\rho_s \varphi_x dx \right.\\
  & \quad \left.- \int_{I\cap(-\infty,1/2]}K'\ast\rho_s \varphi_x dx +\int_{I\cap[1/2,+\infty)}K'\ast\rho_s \varphi_x dx- \int_{(-I)\cup I}K''\ast \rho_s \varphi dx\right]\\
  & = 2\int_I \varphi dx - \frac{1}{4}\int_0^{T+1}\xi(t)dt\left[\int_{(-I)\cap(-\infty,1/2]}K''\ast\rho_s \varphi dx - \int_{(-I)\cap[1/2,+\infty)}K''\ast\rho_s \varphi dx \right.\\
   & \quad \left.- \int_{I\cap(-\infty,1/2]}K''\ast\rho_s \varphi dx +\int_{I\cap[1/2,+\infty)}K''\ast\rho_s \varphi dx+\int_{(-I)\cup I}K''\ast \rho_s \varphi dx\right].
\end{align*}
Now, since $K''$ and $\rho_s$ are even, the same holds for $K''\ast \rho_s$. Therefore we get
\begin{align}
  & \int_\R |\rho_s -c|\varphi dx +\int_0^T\int_\R\left[|\rho_s-c|\varphi_t -\mathrm{sign}(\rho_s-c)(f(\rho)-f(c))K'\ast\rho_s\varphi_x -f(c)K''\ast \rho_s \varphi\right]\,dxdt\nonumber\\
  & \ \leq 2\int_I \varphi dx - \frac{1}{2}\int_0^{T+1}\xi(t)dt\iint_{I\times I} \left(K''(x-y)+K''(x+y)\right) \varphi(x)dy dx.\label{eq:nonunique1}
\end{align}
Let us now require for simplicity the following additional assumption:
\begin{equation}\label{eq:nonunique_assumption}
  K''(x)>0\qquad \hbox{for all $x\in \R$}.
\end{equation}
Actually, such assumption can be relaxed, see remark \ref{relaxed} below. Then, the last integral in \eqref{eq:nonunique1} is clearly positive, and recalling that $\xi(t)=1$ on $t\in[0,T]$, we can choose $T$ large enough so that the whole right-hand side of \eqref{eq:nonunique1} is strictly negative, thus contradicting Definition \ref{solentropicadef}.

The above argument shows that $\rho_s$ is a weak solution but not an entropy solution. On the other hand, the initial condition $\rho_s$ is $L^\infty$ and $BV$, therefore it must generate an entropy solution according to our main Theorem \ref{main}. Clearly, such solution cannot coincide with $\rho_s$. We have therefore proven the following theorem.

\begin{theorem}\label{thm:nonuniqueness}
  Assume (Av), (AK), and \eqref{eq:nonunique_assumption} are satisfied. Then, there exists an initial condition $\bar\rho \in L^\infty(\R)\cap BV(\R)$ such that the Cauchy problem \eqref{CauchyProblem} has more than one distributional weak solution.
\end{theorem}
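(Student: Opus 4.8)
The plan is to exhibit an explicit counterexample rather than argue abstractly. I would fix the mobility $v(\rho) = (1-\rho)_+$, so that the maximal density is $M = 1$, and take the symmetric two–bump datum $\bar\rho = \mathbf{1}_{[-1,-1/2]} + \mathbf{1}_{[1/2,1]}$, which lies in $L^\infty(\R)\cap BV(\R)$ and is admissible for Theorem \ref{main}. The candidate second solution is the time–independent function $\rho_s(t,x) = \bar\rho(x)$. The argument then splits into three independent checks: that $\rho_s$ solves \eqref{CauchyProblem} weakly, that $\rho_s$ fails the entropy condition of Definition \ref{solentropicadef}, and that Theorem \ref{main} nonetheless produces a genuine entropy solution from the same datum. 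Together these force non–uniqueness at the level of weak solutions.

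For the first check I would exploit the \emph{degeneracy} of the nonlinear mobility. Since $\rho_s$ takes only the values $0$ and $1$ and $v(1) = 0$, the flux $\rho_s v(\rho_s) K'\ast\rho_s$ vanishes identically. Hence in the weak formulation only the transport term $\int\!\int \rho_s \varphi_t$ survives, and integrating in time (using $\rho_s$ stationary and $\varphi$ compactly supported) cancels it exactly against the initial–datum contribution $\int \bar\rho\,\varphi(0,\cdot)$. This makes $\rho_s$ a distributional solution essentially for free.

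The heart of the proof, and the step I expect to be delicate, is showing that $\rho_s$ violates the Kruzkov inequality. I would take the entropy level $c = 1/2$ (the midpoint of the two values of $\rho_s$, where $f(c) = 1/4$) and a product test function $\varphi(t,x) = \phi(x)\xi(t)$ whose spatial part $\phi$ is supported only on the two \emph{inner} edges $[-3/4,-1/4]\cup[1/4,3/4]$ of the bumps, i.e. the edges that face each other, where the attractive interaction (encoded by $K'>0$ on $(0,\infty)$) would drive motion in a genuine solution. After integrating the $K'\ast\rho_s\,\varphi_x$ term by parts to produce a $K''\ast\rho_s$ term, and using that $K''\ast\rho_s$ is even as a convolution of two even functions, the expression collapses to a boundary contribution of order one plus a term of the form $-\tfrac12\big(\int_0^{T+1}\xi\big)\iint_{I\times I}\big(K''(x-y)+K''(x+y)\big)\varphi(x)\,dy\,dx$. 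Under the standing hypothesis \eqref{eq:nonunique_assumption} that $K''>0$ everywhere, this double integral is strictly positive, while $\int_0^{T+1}\xi \geq T$; choosing $T$ large therefore drives the right–hand side of the entropy inequality strictly negative, contradicting Definition \ref{solentropicadef}.

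Finally, since $\bar\rho$ satisfies all hypotheses of Theorem \ref{main}, that theorem yields an entropy solution $\rho$ of \eqref{CauchyProblem} with initial datum $\bar\rho$, which in particular is a weak solution obeying the entropy condition. As $\rho_s$ is a weak solution that does \emph{not} obey it, we must have $\rho \not\equiv \rho_s$, and these two distinct weak solutions prove the claim. The only genuine obstacle is the bookkeeping in the entropy computation of the third paragraph; the remaining two checks are structural.
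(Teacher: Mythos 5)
Your proposal is correct and follows essentially the same route as the paper: the same datum $\bar\rho=\mathbf{1}_{[-1,-1/2]}+\mathbf{1}_{[1/2,1]}$ with $v(\rho)=(1-\rho)_+$, the same degeneracy argument ($\rho_s v(\rho_s)\equiv 0$) for the weak-solution check, the same entropy level $c=1/2$ and product test function concentrated on the inner edges leading to the term $-\tfrac12\bigl(\int_0^{T+1}\xi\bigr)\iint_{I\times I}\bigl(K''(x-y)+K''(x+y)\bigr)\varphi(x)\,dy\,dx$, and the same final appeal to Theorem \ref{main} to produce a distinct entropy solution. No gaps; the bookkeeping you flag as delicate is exactly the computation carried out in the paper.
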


\begin{remark}\label{relaxed}
\emph{The assumption \eqref{eq:nonunique_assumption} can be relaxed to include also Gaussian kernels $K(x)=- A e^{-B x^2}$ with $A,B>0$. Indeed, in order to fulfil 
\[\iint_{I\times I} \left(K''(x-y)+K''(x+y)\right) \varphi(x)dy dx>0\]
one has to choose the size of the interval $I$ small enough. We omit the details.}
\end{remark}

\begin{remark}
\emph{The fact that the initial condition $\rho_s$ will not give rise to a stationary solution can also be seen intuitively by using the result in Theorem \ref{main}. Let us approximate $\bar\rho$ with $2(N+1)$ particles with mass $1/(2(N+1))$, with $N$ integer, and with the particles located at $\bar{x}_i$, $i=1,\ldots,2(N+1)$, with
\begin{align*}
  & \bar{x}_i=-1+\frac{i}{2(N+1)}\,,\qquad i=0,\ldots,N\\
  & \bar{x}_i=1/2+\frac{i-N}{2(N+1)}\,,\qquad i=N+1,\ldots,2N+1.
\end{align*}
Let now evolve the particles' positions with the usual ODE system
\[\dot{x}_i=-\frac{v(R_i)}{N}\sum_{j>i}(x_i-x_j) -\frac{v(R_{i-1})}{N}\sum_{j<i}(x_i-x_j).\]
It can be easily proven (we omit the details) that the solution to the particle system preserves the even symmetry of the initial condition.
Moreover, the particle $x_N$ - i.e. the leading particle of the left bump of the initial condition - has a positive initial speed which can be controlled from below by a constant provided that, for example, $K'$ is supported on $\R$ and is strictly monotone on $(0,+\infty)$. Indeed, as all particles $x_i$ with $i<N$ are posed at minimal distance at $t=0$ and the initial distance $x_{N+1}-x_N=1$, we have
\[\dot{x}_N(0)=v(1/N)\frac{1}{N}\sum_{j>N}K'(x_j(0)-x_N(0)) \geq v(1/N)\frac{N+1}{N}K'(2)>v(1/2)K'(2)>0.\]
Similarly, one can show that all particles $i=0,\ldots,N-1$ `move' from their initial position, although their initial speed is zero. A numerical simulation performed in Section \ref{sec:numerics} actually show that for large $N$ the discrete density tends to form a unique bump for large times. Hence, since Theorem \ref{main} shows that the particle solution is arbitrarily close in $L^1_{loc}$ to the entropy solution, this argument supports the evidence that the entropy solution is not stationary. }
\end{remark}

Apart from producing an explicit example of non-uniqueness of weak solutions, the above example shows that there are stationary weak solutions that are not entropy solutions, and therefore cannot be considered as stationary solutions to our problem according to Definition \ref{solentropicadef}. This raises the following natural question: what are the steady states of \eqref{eq:intro_PDE} in the entropy sense? Before asking this question, it will be useful to tackle another task: as the approximating particle system converges to the entropy solutions, detecting the \emph{steady states of \eqref{Odes}} will give us a useful insight about the steady states at the continuum level.

Let us restrict, for simplicity, to the case of an even initial condition $\bar{\rho}$, such that $\|\bar{\rho}\|_{L^1}=1$  and $N\in\N$ fixed. We assume here that $K'$ is supported on the whole $\R$. Consider the following particle configuration,
\begin{equation}\label{stable_conf}
\begin{cases}
\tilde{x}_1=-\frac{1}{2}+\frac{1}{2N},\\
\tilde{x}_{i+1}=\tilde{x}_1+\frac{i}{N},\quad i=1,...,N-2,\\
\tilde{x}_N=\tilde{x}_1+\frac{N-1}{N}=\frac{1}{2}-\frac{1}{2N}.
\end{cases}
\end{equation}
With this choice we get
\[R_i = \frac{1}{N(\tilde{x}_{i+1}-\tilde{x}_i)}=1, \quad v(R_i)=0, \quad \forall i=1,...,N-1,
\]
and it is easy to show that this configuration is a stationary solution for system \eqref{Odes}. Actually, up to space translations, this is the \emph{only} possible stationary solution. In order to prove that, assume that we have a particle configuration as in \eqref{stable_conf} but with only one particle labelled $I$ such that
\[
 \tilde{x}_I=\tilde{x}_{1}+\frac{I-1}{N}, \quad  \tilde{x}_{I+1}=\bar{x}>\tilde{x}_{I}+\frac{1}{N}.
\]
For such a configuration
\[
R_I = \frac{m}{N(\tilde{x}_{I+1}-\tilde{x}_I)}<1, \quad v(R_I)>0,\mbox{ and } \quad v(R_i)=0 \quad\forall i\neq I,
\]
and the $I$ particles evolves according to
\begin{align*}
 &  \dot{\tilde{x}}_I = -\frac{v(R_I)}{N}\sum_{j>I}K'(\tilde{x}_I-\tilde{x}_j)  =-\frac{v(R_I)}{N}\sum_{j>I}K'\left(\frac{1}{N}(I-j)\right)>0,
\end{align*}
and then $\tilde{x}_I$ moves with positive velocity.

We observe that, as $N\to\infty$, the piecewise constant density reconstructed by configuration \eqref{stable_conf} converges in $L^1$ to the step function
\[
\rho_S=\mathbf{1}_{[-\frac{1}{2},\frac{1}{2}]}.
\]
The above discussion suggests that all initial data with multiple bumps only attaining the values $0$ and $1$ are (weak solutions but) not entropy solutions except $\rho_S$. Actually, this statement can be proven exactly in the same way as we proved Theorem \ref{thm:nonuniqueness}, as it is clear that the position of the decreasing discontinuity at $x=-1/2$ and of the increasing discontinuity at $x=1/2$ do not play an essential role. By choosing the test function $\varphi$ suitably, one can easily show that the entropy condition can be contradicted by suitably centring $\varphi$ around the non-admissible discontinuities. We omit the details. As a consequence, we can assert that $\rho_S$ is the \emph{only} stationary solution to \eqref{eq:intro_PDE} in the sense of Definition \ref{solentropicadef}.

\section{Numerical simulations}\label{sec:numerics}
The last section of the paper is devoted to present some numerical experiments based on the particle methods presented in the paper, supporting the results in the previous sections. The qualitative property that emerges more clearly in the simulations below is that solutions tend to aggregate and narrow their support. However, the maximal density constraint avoids the blow-up, and the density profile tends for large times towards the non-trivial stationary pattern presented at the end of the previous section. We compare our particle method with a classical Godunov method for \eqref{eq:intro_continuum}.

\subsection*{Particle simulations}

We first test the particle method introduced in Section \ref{sec:2}. We proceed as follows: we set the number of particles as $N$ and we reconstruct the initial distribution according to \eqref{eq:dscr_IC} (for step functions we simply set the particles initially at distance $\frac{\ell}{N}$ from each other where $\ell$ is the length of the support). Once we have defined the initial distribution, we solve the system \eqref{Odes} with a MATLAB solver and we reconstruct the discrete density as
\begin{equation}\label{eq:central}
 R_i(t)=\frac{m}{2N(x_{i+1}(t)-x_{i-1}(t))}, \quad i=2,N-1.
\end{equation}

The choice of central differences does not effect the particle evolution, since in solving system \eqref{Odes} we define $R_i$ with forward differences. The choice in\eqref{eq:central} is only motivated by the symmetry of the patterns we expect to achieve for large times.
\begin{remark}\label{rem_zero_den}
\emph{In the construction of the discrete densities we get the problem of giving density to the first and the last particles (or only to the last one if we use forward differences). Among all the possible choices we set at zero this two densities, namely
\[
 R_1(t)=R_N(t)=0.
\]
This is a natural choice if we are dealing with step functions but it is not suitable with more general initial conditions, see  \figurename~\ref{fig:cup_IF}.}
\end{remark}

In all the simulations we set
\[
 v(\rho)=1-\rho, \quad K(x)=\frac{C}{\sqrt{2\pi}}e^{-\frac{x^2}{2}} \mbox{ and } N=300.
\]
In the particles evolution we don't fix any time step that is automatically determined by the solver.

The first example we furnish is the case of a single step function with symmetric support,
\begin{equation}\label{eq:sing_step}
 	\bar{\rho}(x) = 0.3 \quad x\in\left[-1,1\right].
\end{equation}
For this initial condition $m=0.6$, so the final configuration will be a step function of value $\rho=1$ supported in $\left[-0.3,0.3\right]$. In  \figurename~\ref{fig:1s_IF} we plot initial (left) and final (right) configurations, while in  \figurename~\ref{fig:1s_Ev} evolution in time is plotted.
\begin{figure}[!ht]
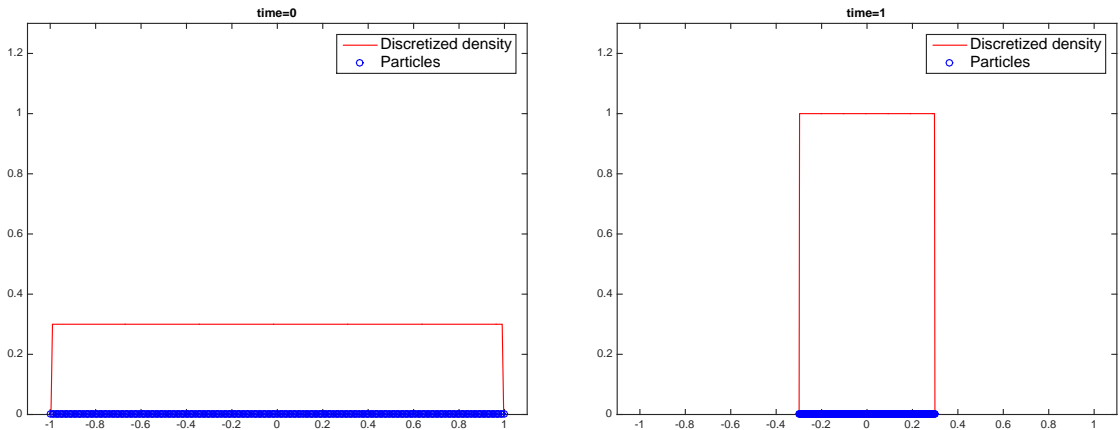

\begin{center}
\begin{minipage}[c]{.45\textwidth}
\includegraphics[width=.95\textwidth]{PLNL1S03_300_1}
\end{minipage}
\hspace{3mm}
\begin{minipage}[c]{.45\textwidth}
\includegraphics[width=.95\textwidth]{PLNL1S03_300_601}
\end{minipage}
\end{center}
\caption{On the left: initial condition as in \eqref{eq:sing_step}; on the right: the final stationary configuration. We plot the discrete density in (red)-continuous line and the particles positions in (blue)-circles on the bottom of the picture.}
\label{fig:1s_IF}
\end{figure}
\begin{figure}[!ht]
\begin{center}
\begin{minipage}[c]{.70\textwidth}
\includegraphics[width=.95\textwidth]{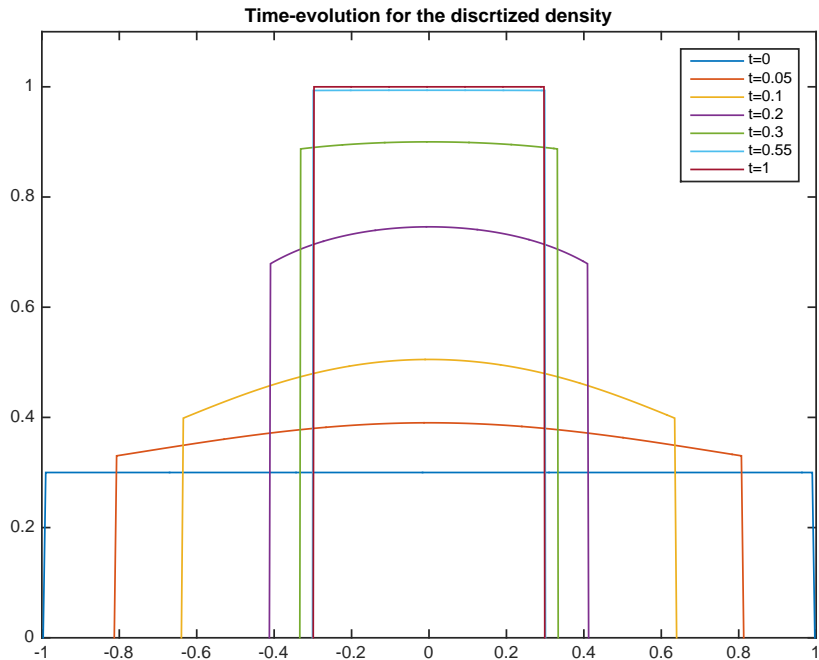}
\end{minipage}
\end{center}
\caption{Evolution of the discrete density for the initial configuration \eqref{eq:sing_step}.}
\label{fig:1s_Ev}
\end{figure}

Next we show the evolution corresponding to the following initial condition,
\begin{equation}\label{eq:id_parabola}
\bar{\rho}(x) = \frac{3}{4}(1-x^2), \quad x\in\left[-1,1\right].
\end{equation}
Even in this case the function is symmetric with respect to the origin so it will converge to the unitary step function supported in $\left[-0.5,0.5\right]$ since $\bar{\rho}$ has normalized mass. As in the previous example initial and final configurations and time evolution of the solution are plotted in \figurename~\ref{fig:cup_IF}.
\begin{figure}[!ht]
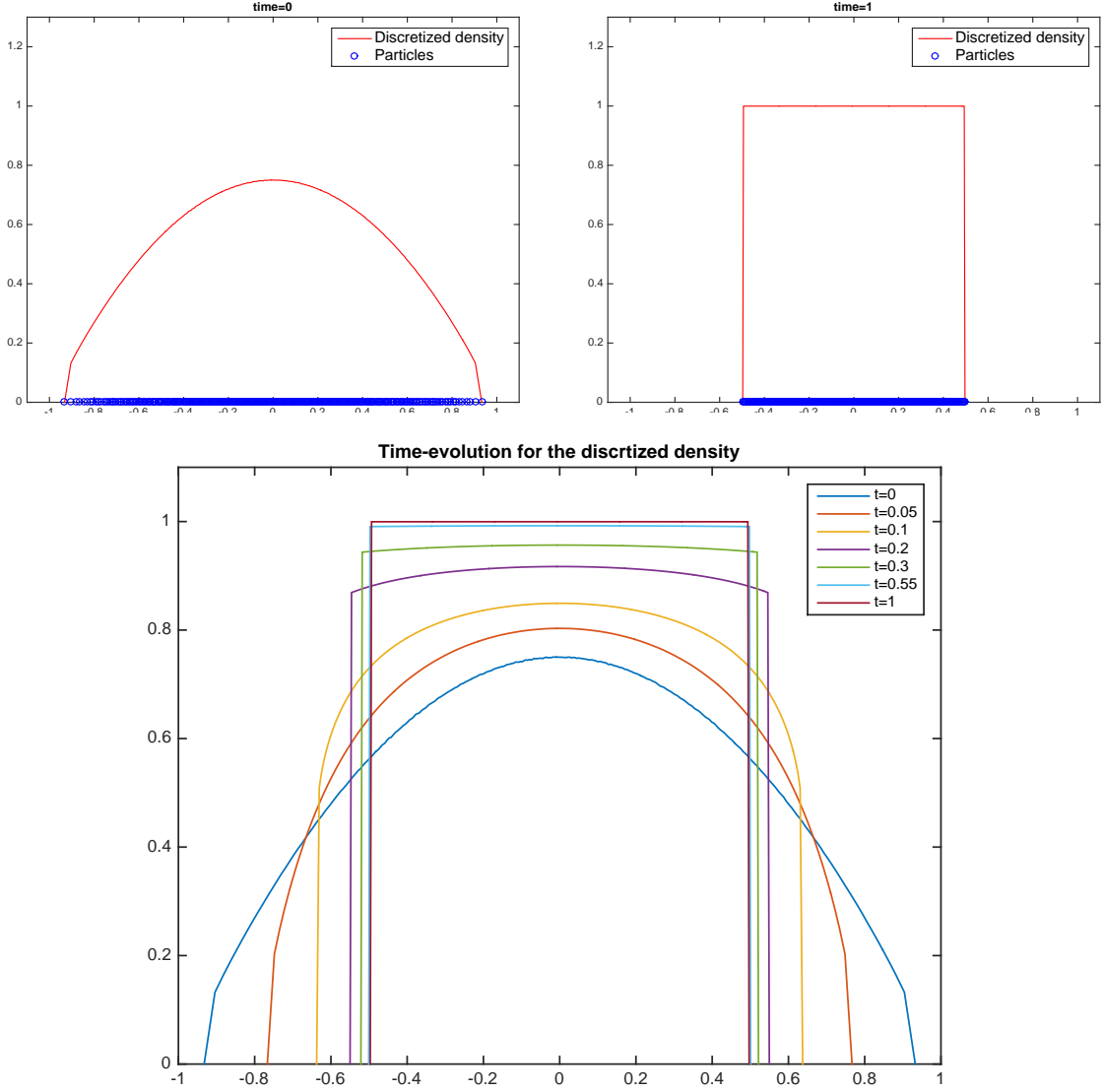

\begin{center}
\begin{minipage}[c]{.45\textwidth}
\includegraphics[width=.95\textwidth]{PLNL_CUP_300_1}
\end{minipage}
\hspace{3mm}
\begin{minipage}[c]{.45\textwidth}
\includegraphics[width=.95\textwidth]{PLNL_CUP_300_601}
\end{minipage}
\hspace{3mm}
\begin{minipage}[c]{.70\textwidth}
\vspace{3mm}
\includegraphics[width=.95\textwidth]{Time-evolution_G}
\end{minipage}
\end{center}
\caption{For the initial condition \eqref{eq:id_parabola} the initial particle configuration is obtained thanks to \eqref{eq:dscr_IC}. The discrete density behaves suitably around all the particles except the first and the last one. See Remark \ref{rem_zero_den}. }
\label{fig:cup_IF}
\end{figure}

We conclude with step functions with disconnected support. We first study the case
\begin{equation}\label{eq:2s_0206}
 	\bar{\rho}(x) = \begin{cases}
  	                            0.2 \quad x\in\left[-0.5,0\right]\\
                                0.6 \quad x\in\left[0.5,1\right]
                   	\end{cases},
\end{equation}
showing that the two bumps merge into a single step. Since symmetry is lost, it is not straightforward to determine where this final configuration will stabilize, but in \figurename~\ref{fig:0206_Ev} we can see that they still aggregate in a step of unitary density and support of length $m$.

\begin{figure}[!ht]
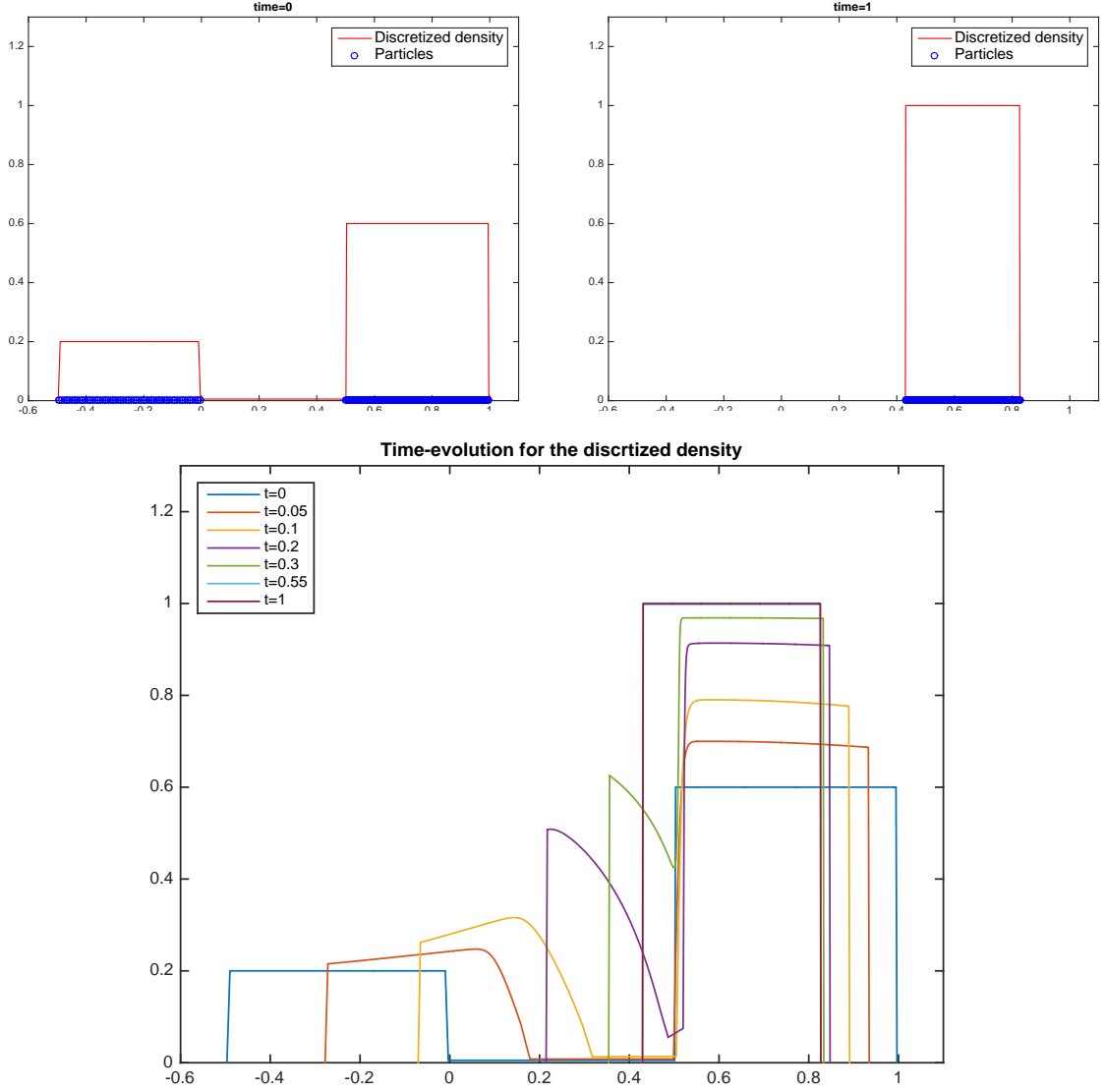

\begin{center}
\begin{minipage}[c]{.45\textwidth}
\includegraphics[width=.95\textwidth]{PLNL2S0206_300_1}
\end{minipage}
\hspace{3mm}
\begin{minipage}[c]{.45\textwidth}
\includegraphics[width=.95\textwidth]{PLNL2S0206_300_601}
\end{minipage}
\hspace{3mm}
\begin{minipage}[c]{.70\textwidth}
\vspace{3mm}
\includegraphics[width=.95\textwidth]{Time-evolution_2s0206}
\end{minipage}
\end{center}
\caption{Evolution of a the two steps initial condition \eqref{eq:2s_0206}. The pattern on the left is the one with less density and moves faster attracted by the one on the right and they merge in a single step of unitary density.}
\label{fig:0206_Ev}
\end{figure}

More interesting is the case of the following initial condition:
\begin{equation}\label{eq:2s_11}
    \bar{\rho}(x) = \begin{cases}
  	                            1 \quad x\in\left[-0.5,0\right]\\
                                1 \quad x\in\left[0.5,1\right]
                   	\end{cases}.
\end{equation}
Note that this is a stationary weak solution to \eqref{eq:intro_continuum} but it is not an entropy solution.
In \figurename~\ref{fig:11_IF} we plot the time evolution of this initial configuration.

\begin{figure}[!ht]
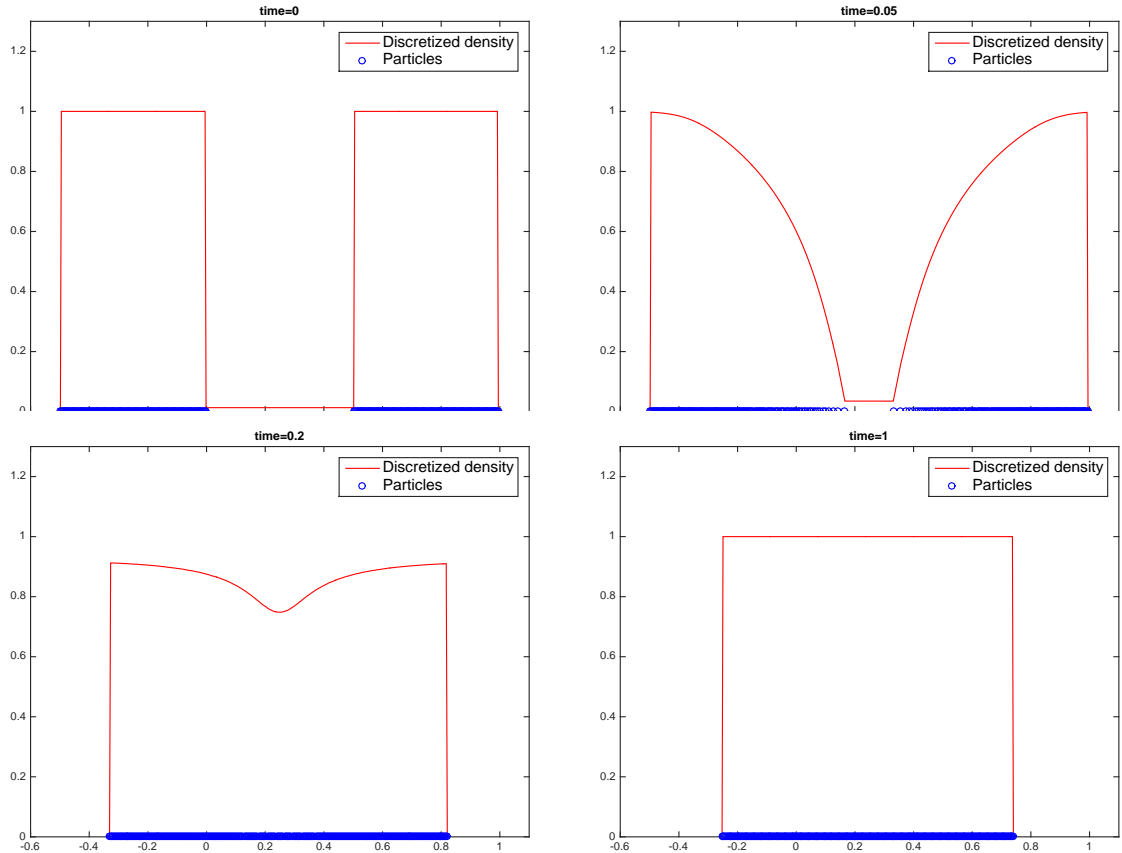

\begin{center}
\begin{minipage}[c]{.45\textwidth}
\includegraphics[width=.95\textwidth]{PLNL2S11_300_1}
\end{minipage}
\hspace{3mm}
\begin{minipage}[c]{.45\textwidth}
\includegraphics[width=.95\textwidth]{PLNL2S11_300_31}
\end{minipage}
\begin{minipage}[c]{.45\textwidth}
\includegraphics[width=.95\textwidth]{PLNL2S11_300_121}
\end{minipage}
\hspace{3mm}
\begin{minipage}[c]{.45\textwidth}
\includegraphics[width=.95\textwidth]{PLNL2S11_300_601}
\end{minipage}
\end{center}
\caption{Solution to \eqref{CauchyProblem} with initial condition \eqref{eq:2s_11}. The initial condition is a weak stationary solution to \eqref{eq:intro_PDE}. However, the particle scheme converges to another solution, actually the unique entropy solution to \eqref{CauchyProblem}. The picture shows how that two `internal' discontinuities are not admissible in the entropy sense, and they are therefore `smoothed' immediately after $t=0$.}
\label{fig:11_IF}
\end{figure}

\subsection*{Comparison with classical Godunov method}
In order to validate the previous simulations we compare the results with a classical Godunov method. The main issue in this case is to dealing with the two directions in the transport term. More precisely, since the kernel $K$ is an even function, we can rephrase \eqref{eq:intro_continuum} as
\begin{equation}\label{eq:god1}
 \partial_t \rho = \partial_x(\rho v(\rho) )K_\rho^{+}(x)+\partial_x(\rho v(\rho)) K_\rho^{-}(x)+\rho v(\rho) K''\ast\rho,
\end{equation}
where
\begin{align*}
 &    K_\rho^{+}(x)=\int_{x\geq y}K'(x-y)\rho(y)dy\geq 0,\\
 &    K_\rho^{-}(x)=\int_{x < y}K'(x-y)\rho(y)dy\leq 0.
 \end{align*}
 The evolution of $\rho$ is driven by two transport fields: $K_\rho^{+}$ pushing the density from left to right $K_\rho^{-}$ pushing the density from right to left. The third term on the r.h.s. in \eqref{eq:god1} plays the role of a source term. Following the standard finite volume approximation procedure on $N$ cells $\left[x_{j-\frac12},x_{j+\frac12}\right]$, the discrete equation reads as
\[
 \frac{d}{dt}\tilde{\rho}_j = K_\rho^{+}(x_j)\frac{F_{j+\frac12}^{+}-F_{j-\frac12}^{+}}{\Delta x} + K_\rho^{-}(x_j)\frac{F_{j+\frac12}^{-}-F_{j-\frac12}^{-}}{\Delta x} + \tilde{\rho}_j v(\tilde{\rho}_j)dK_j
\]
where $F_{j+\frac12}^{+}$ and $F_{j+\frac12}^{-}$ are the Godunov approximations of the fluxes and $dK_j$ is an approximation of the convolution in the reaction term obtained via a quadrature formula. We integrate in time with a time step satisfying the CFL condition of the method.  In \figurename~\ref{fig:Test} we compare the solutions obtained with the two methods in all the examples illustrated above.

 \begin{figure}[!ht]
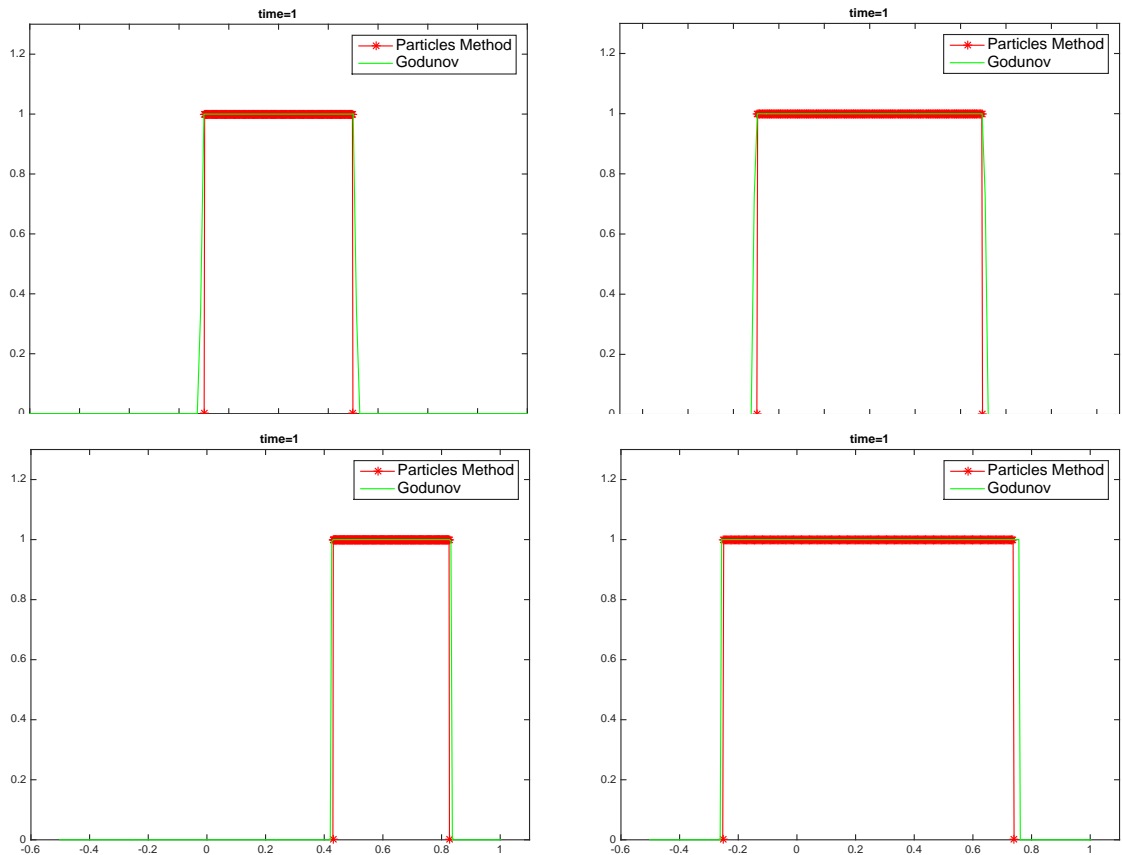

\begin{center}
\begin{minipage}[c]{.45\textwidth}
\includegraphics[width=.95\textwidth]{PLNL_God_1S03_300_601}
\end{minipage}
\hspace{3mm}
\begin{minipage}[c]{.45\textwidth}
\includegraphics[width=.95\textwidth]{PLNL_God_CUP_300_601}
\end{minipage}
\begin{minipage}[c]{.45\textwidth}
\includegraphics[width=.95\textwidth]{PLNL_God_2S0206_300_601}
\end{minipage}
\hspace{3mm}
\begin{minipage}[c]{.45\textwidth}
\includegraphics[width=.95\textwidth]{PLNL_God_2S11_300_601}
\end{minipage}
\end{center}
\caption{Comparison between particles (red stars) and Godunov (green continuous line) methods at final time $t=1$. On the top: solutions corresponding to initial condition \eqref{eq:sing_step} (left) and \eqref{eq:id_parabola}(right). On the bottom: final configurations for \eqref{eq:2s_0206} (left) and \eqref{eq:2s_11}.}
\label{fig:Test}
\end{figure}

\section*{Acknowledgements}
The authors acknowledge support from the EU-funded Erasmus Mundus programme `MathMods - Mathematical models in engineering: theory, methods, and applications' at the University of L'Aquila, from the Italian GNAMPA mini-project `Analisi di modelli matematici della fisica,
della biologia e delle scienze sociali', and from the local fund of the University
of L’Aquila `DP-LAND (Deterministic Particles for Local And Nonlocal Dynamics).


\end{document}